\documentclass[11pt,leqno]{amsart}
\usepackage{amsmath,epsfig,graphicx,color, float,amssymb}
\usepackage{subcaption}
\usepackage{xcolor}
\usepackage{relsize}
\usepackage{comment}
\usepackage[colorlinks, linkcolor=blue,citecolor=blue]{hyperref}
\usepackage{pdfpages}
\usepackage{graphicx}
\usepackage{mwe}
\usepackage{algorithm}
\usepackage{algorithmicx}
\usepackage{algpseudocode}
\usepackage{dsfont}

\usepackage[margin=1.3in]{geometry}

\numberwithin{table}{section}
\numberwithin{figure}{section}
\numberwithin{equation}{section}

\definecolor{darkblue}{rgb}{.2, 0.2,.8}
\definecolor{darkgreen}{rgb}{0,0.5,0.3}
\definecolor{darkred}{rgb}{.8, .1,.1}

\newcommand{\dd}{\mathrm{d}}

\newtheorem{lemma}{Lemma}[section]
\newtheorem{theorem}[lemma]{Theorem}

\newtheorem{assumption}{Assumption}
\newtheorem{definition}[lemma]{Definition}

\newtheorem{remark}{Remark}[section]

\usepackage{bm}

\allowdisplaybreaks

\begin{document}
\title[Pathwise Convergence of a Modular Simulation Scheme for mH-SDEs]{Pathwise Convergence of a Modular Simulation Scheme for Hybrid SDEs with Memory}

\author{Oscar Peralta}

\begin{abstract}
This work introduces hybrid stochastic differential equations with memory (mH-SDEs), a new class of stochastic systems where  transition rates depend on the joint history of both Euclidean and discrete components. This extends existing hybrid stochastic differential equation models that condition transitions only on the Euclidean process history, enabling richer dependencies such as age-based transitions and self-reinforcing dynamics. For mH-SDEs driven by Lévy processes, we develop the Modular-Poisson algorithm, which employs path-dependent uniformization to generate discrete jumps while advancing the Euclidean evolution between jumps using any established SDE solver as a micro-algorithm. 
The main theoretical contribution establishes pathwise convergence with explicit rates, developing new techniques to control error accumulation across regime changes and bound the probability of process decoupling. The modular design allows practitioners to employ existing, well-studied SDE simulation methods while preserving theoretical convergence properties. This work provides the first rigorous convergence analysis for hybrid systems with joint history dependence under jump-diffusion dynamics.
\end{abstract}
\keywords{Hybrid stochastic differential equations, Lévy processes, Modular-Poisson algorithm, regime-switching.}
\maketitle

\section{Introduction}
\label{sec:intro}

Hybrid stochastic differential equations (H-SDEs) \cite{yin2009hybrid} model systems where a Euclidean-valued process $X=\{X_t\}_{t\ge 0}$ evolves under dynamics that change according to a discrete-valued process $J=\{J_t\}_{t\ge 0}$ (hereafter referred to as the Euclidean and discrete components, respectively). These models arise naturally in applications ranging from mathematical finance, where market regimes shift based on economic conditions, to engineering control systems with mode-dependent dynamics. The numerical analysis literature for H-SDEs has developed along several directions, organized primarily by the structure of the  transition mechanism. For Markovian regime-switching systems, foundational work established Euler–Maruyama schemes \cite{ym04}, with subsequent extensions handling non-Lipschitz coefficients \cite{myy07} and providing pathwise convergence rates \cite{ny12}. Higher-order methods include Milstein-type schemes \cite{nhny17} and general-order approximations \cite{f17}. When drift and diffusion coefficients exhibit super-linear growth, specialized techniques such as tamed-Euler \cite{nnhy18} and projected-Euler methods \cite{zyx21} become relevant. State-dependent transitions have been addressed through dedicated algorithms \cite{yin2010approximation}, while theoretical advances have extended to systems with countably infinite discrete states \cite{Shao2015, XiZhu2017}. For jump-diffusion driven H-SDEs, researchers have developed finite-difference approaches \cite{ysz05} and Euler–Maruyama extensions \cite{mhm19}.

Path-dependent stochastic models typically follow one of two methodological approaches. The first embeds historical dependence directly within the Euclidean dynamics, yielding stochastic functional differential equations (SFDEs) where drift and diffusion coefficients depend functionally on the trajectory history \cite{Mohammed1984, BYY2016}. The second approach augments standard SDEs with a discrete jump component that responds to the Euclidean process behavior.

Most research on regime-switching diffusions adopts this second framework, developing models where transition rates depend on the Euclidean component $X$ through either its current state \cite{yin2010approximation,AlbrecherPeralta2025} or its historical trajectory \cite{ny16,nguyen2021stability}. Recent theoretical progress includes stability analysis for past-dependent transition systems with countable state spaces \cite{NguyenEtAl2024}. However, existing models impose a fundamental restriction: the discrete component can only respond to the Euclidean  process, not to its own historical evolution.

Here we are interested in models where regime transitions explicitly depend on the system's historical behavior, which we term hybrid SDEs with memory (mH-SDEs). Our mH-SDEs allow the discrete jump process to depend on its own past behavior in addition to the Euclidean component's history. This generalization enables transition rates that respond to the complete joint history of both processes, capturing phenomena such as age-dependent regime changes, where transition propensities depend on sojourn times in previous states, or self-reinforcing dynamics, where past jump patterns influence future transition behavior. While our approach builds upon structural elements from past-dependent regime-switching models \cite{ny16, d18}, the primary challenge lies not in formulating these systems, but in developing computationally tractable simulation methods. The initial motivation for studying this specific class of hybrid systems draws upon the thesis of Allan \cite{Allan2023}. While that work provided a preliminary exploration of the topic, the present paper establishes a more general rigorous framework, addressing technical limitations in the original formulation and extending the validity of the simulation algorithms.

The central contribution of this work is a modular simulation approach that transforms the numerical treatment of complex hybrid systems from a problem-specific task into a systematic methodology. Rather than deriving specialized algorithms for each combination of Euclidean dynamics and  transition rules, we develop a unified methodology that employs existing, well-established SDE simulation schemes as interchangeable components while preserving theoretical convergence properties. This innovation leads to the Modular-Poisson algorithm, which addresses the fundamental challenge posed by mH-SDEs: existing numerical methods do not directly extend to handle the joint history dependence that defines these systems. Our algorithm employs a path-dependent uniformization scheme to generate discrete jump times via a master Poisson process, while evolving the Euclidean component between jumps using any suitable SDE solver (termed a micro-algorithm). We establish pathwise convergence with explicit rates for this design, proving that practitioners can incorporate established SDE methods as micro-algorithms without compromising convergence properties.

Establishing this convergence result requires overcoming significant theoretical challenges that arise from the interaction between continuous approximation errors and discrete regime transitions in path-dependent systems. While existing literature addresses related settings, including hybrid systems with delay \cite{LiMao2020,DongMao2022,LiEtAl2023} and neutral SFDEs with Markovian regime changes \cite{HuEtAl2019}, no prior work has tackled the joint history dependence of the random  transition mechanism that characterizes mH-SDEs. Our convergence analysis develops novel techniques for controlling error accumulation across jump events and bounding the probability of divergence between true and approximated transition processes, providing the first rigorous treatment of this model class.

The remainder of this paper is organized as follows. Section~\ref{sec:model} introduces the mathematical  formulation for mH-SDEs and establishes existence and uniqueness of solutions. Section~\ref{sec:apps} demonstrates the model's versatility through applications spanning finance, insurance, reliability theory, and self-exciting systems. Section~\ref{sec:algo} presents the Modular-Poisson algorithm and proves its pathwise convergence with explicit rates. Section~\ref{sec:conclusion} discusses implications and future research directions. Appendix \ref{app:numerical_examples} provides detailed numerical examples that illustrate each application class discussed in Section~\ref{sec:apps}, demonstrating the algorithm's performance across diverse mH-SDE specifications.

\section{Model Definition}\label{sec:model}

In this section, we develop the mathematical framework for hybrid SDEs with memory (mH-SDEs), establishing the system dynamics and conditions for a unique solution to exist. The construction relies on a path-dependent  transition mechanism driven by Poisson random measures and accommodates general Lévy noise in the Euclidean component. Existence and uniqueness of solutions is established via a recursive concatenation procedure and connect the formulation to a representation with time-dependent intensity matrices.

Let us consider a system that evolves on the state space $E=\mathbb{Z}\times\mathbb{R}^p$, equipped with the product topology. Denote by $D_E(I)$ the Skorokhod space of càdlàg functions from an interval $I\subset\mathbb{R}$ to $E$. The notation $J$, $X$, and $Y=(J,X)$ is used to denote the full stochastic processes, with $J_t$, $X_t$, and $Y_t$ representing their values at time $t$. For $t\in (-\infty,\infty)$, we write $Y_{(-\infty,t]}:=\{Y_s\}_{s\le t}$ for the trajectory on the interval $(-\infty,t]$ and $Y_{(-\infty,t)}:=\{Y_s\}_{s< t}$ for the trajectory on the open interval $(-\infty,t)$. This notation extends naturally to the component processes $J$ and $X$.

\begin{remark}
The space $D_E(I)$ is endowed with the topology induced by the Skorokhod $J_1$ metric on compact subintervals \cite{Whitt2002,Billingsley1999}. This choice serves two purposes: it makes the space of càdlàg trajectories complete (under mild conditions on the state space), unlike the uniform topology, and it provides the natural setting for  jump processes by appropriately handling convergence when jump times may differ across  paths. While the $J_1$ topology provides the proper theoretical setting, our convergence analysis in Section \ref{sec:algo} employs uniform distance bounds on compact intervals. Since uniform convergence implies $J_1$ convergence on compacts, our results apply to the standard Skorokhod $J_1$ topology.
\end{remark}

We work on a complete filtered probability space $(\Omega,\mathcal{F},\{\mathcal{F}_t\}_{t\ge0},\mathbb{P})$ satisfying the usual conditions. The filtration $\{\mathcal{F}_t\}_{t\ge0}$ is the completed, right-continuous augmentation of the natural filtration generated by two independent driving processes: a Poisson random measure $\mathfrak{p}$ on $[0,\infty)\times(0,\infty)$ with intensity measure $\dd t\otimes\dd z$, and a $d$-dimensional Lévy process $L$. Here, $\mathfrak{p}$ has time as its first coordinate and marks as its second coordinate, and is adapted to $\{\mathcal{F}_t\}_{t\ge0}$ in the sense that $\mathfrak{p}([0,s] \times B)$ is $\mathcal{F}_s$-measurable for any $s \geq 0$ and Borel set $B \subset (0,\infty)$. Define $\mathcal{F}_{t-}=\sigma\!\left(\bigcup_{s<t}\mathcal{F}_s\right)$ for $t>0$. Equivalently, $\mathcal{F}_{t-}$ is the $\sigma$-algebra that generates the predictable $\sigma$-field. For a $\mathcal{F}_s$-stopping time $\tau$, we define 
\begin{align*}
\mathcal{F}_\tau &:= \{A \in \mathcal{F} : A \cap \{\tau \leq t\} \in \mathcal{F}_t \text{ for all } t \geq 0\}, \\
\mathcal{F}_{\tau-} &:= \{A \in \mathcal{F} : A \cap \{\tau < t\} \in \mathcal{F}_t \text{ for all } t > 0\}.
\end{align*}

For $t>0$, the discrete process $J$ taking values in $\mathbb{Z}$ and the Euclidean process $X$ in $\mathbb{R}^p$ satisfy the coupled system
\begin{align}
\dd J_t &= \int_{(0,\infty)} h(t,z)\,\mathfrak{p}(\dd t,\dd z), \label{eq:hybrid_J} \\
\dd X_t &= g(J_{t-},X_{t-})\,\dd L_t. \label{eq:hybrid_X}
\end{align}
Notably, we do not impose standard regularity conditions such as Lipschitz continuity or linear growth on the coefficient function $g:\mathbb{Z}\times\mathbb{R}^p\to\mathbb{R}^{p\times d}$. Instead, we only require that for each discrete mode $i\in\mathbb{Z}$, the corresponding mode-specific SDE admits a pathwise unique strong solution for any initial condition (see Assumption~\ref{ass:main_construction} below).

The  transition mechanism is governed by a predictable functional $h(t,z)$ that depends on the complete system history. For each $(t,z)$, the quantity $h(t,z)$ is $\mathcal{F}_{t-}$-measurable and determined by a non-anticipative map $\mathfrak{h}$ according to
\[
h(t,z)=\mathfrak{h}\!\left(t,z,\,Y_{(-\infty,t)}\right),
\]
where $Y_{(-\infty,t)}$ represents the joint process trajectory up to time $t$, viewed as an element of $D_E((-\infty,t))$. A jump of magnitude $h(t,z)$ occurs at time $t$ when the Poisson random measure $\mathfrak{p}$ places an atom at $(t,z)$. Note that if $h(t,z) = 0$, no actual jump occurs and the discrete process remains unchanged.

Define a strong solution to the mH-SDE system as follows. A process $Y=(J,X)$ with (deterministic) initial condition $\eta\in D_E((-\infty,0])$ constitutes a strong solution if:
\begin{enumerate}
   \item For $t\le 0$, we have $Y_t=\eta(t)$.
   \item $Y$ is càdlàg and adapted to $\{\mathcal{F}_t\}_{t\ge0}$.
\item The components $(J,X)$ satisfy the integral forms of \eqref{eq:hybrid_J} and \eqref{eq:hybrid_X} for all $t>0$ almost surely, with $X$ remaining continuous at the jump times of $J$.
\end{enumerate}

Existence and uniqueness are established through a constructive approach that builds the solution  trajectory recursively between jump times. To do so, we impose two conditions. The first regularizes the  transition mechanism to ensure a well-defined concatenation scheme; the second guarantees existence of Euclidean path segments for any starting configuration.

\begin{assumption}\label{ass:main_construction}
There exists $\lambda>0$ such that the jump functional has uniformly compact support in the mark variable. That is, for every $t\ge 0$ and $y\in D_E((-\infty,t))$, we have $\mathfrak{h}(t,z,y)=0$ whenever $z\ge\lambda$. Moreover, for each fixed mode $i\in\mathbb{Z}$, initial value $x\in\mathbb{R}^p$, and start time $s\ge0$, the SDE
\[
\dd Z_t=g(i,Z_{t-})\,\dd (\theta_sL)_t,\qquad Z_0=x,
\]
admits a pathwise unique strong solution, where $(\theta_sL)_t:=L_{s+t}-L_s$ is the time-shifted Lévy process. Denote this solution by $Z^{(i,x,s)}=\{Z^{(i,x,s)}_t\}_{t\ge 0}$.
\end{assumption}

Under Assumption~\ref{ass:main_construction}, the jump dynamics of $J$ are governed by the atoms of $\mathfrak{p}$ within the strip $[0,\infty)\times(0,\lambda)$. Let $\{(T_m,U_m)\}_{m\ge1}$ denote these atoms, ordered by increasing arrival time $T_m$. The associated counting process $N=\{N_t\}_{t\ge 0}$ where
\[
N_t:=\mathfrak{p}\big([0,t]\times(0,\lambda)\big)=\sum_{m=1}^\infty \mathds{1}_{\{T_m\le t\}},\qquad t\ge0,
\]
is a Poisson process with rate $\lambda$. Since $N$ has càdlàg  trajectories with almost surely non-explosive arrival times, we have $T_m\to\infty$ as $m\to\infty$.

The solution $Y=(J,X)$ is constructed recursively on the intervals $[T_{m-1},T_m]$ for $m\ge1$, starting from the initial condition on $(-\infty,0]$ and setting $T_0=0$. At each step $m$, we determine the path segment on $[T_{m-1},T_m]$ based on the history accumulated up to time $T_{m-1}$. The construction proceeds as follows.

Let $i_{m-1}=J_{T_{m-1}}$ and $x_{m-1}=X_{T_{m-1}}$ denote the system state at time $T_{m-1}$. On the interval $[T_{m-1},T_m)$, the discrete component remains constant while the Euclidean component evolves according to the mode-specific dynamics, that is,
\begin{align*}
J_t &= i_{m-1}, \qquad t\in[T_{m-1},T_m), \\
X_t &= x_{m-1} + \int_{T_{m-1}}^t g(i_{m-1},X_{s-})\,\dd L_s, \qquad t\in[T_{m-1},T_m).
\end{align*}
By Assumption~\ref{ass:main_construction}, this Euclidean evolution has a unique solution given by $X_{T_{m-1}+s} = Z^{(i_{m-1},x_{m-1},T_{m-1})}_{s}$ for $s\in[0,T_m-T_{m-1})$.

At the jump time $T_m$, the discrete component potentially transitions while the Euclidean component remains continuous. The jump magnitude depends on the complete history accumulated up to time $T_m$, translating to
\begin{align*}
J_{T_m} &= J_{T_m-} + \mathfrak{h}\!\big(T_m,U_m,\,Y_{(-\infty,T_m)}\big), \\
X_{T_m} &= X_{T_m-}.
\end{align*}
This recursive procedure defines a unique càdlàg process $Y$ on $[0,\infty)$ since the jump times are non-explosive.

\begin{theorem}
Under Assumption \ref{ass:main_construction}, there exists a unique strong solution $Y=(J, X)$ to the system \eqref{eq:hybrid_J}–\eqref{eq:hybrid_X}.
\end{theorem}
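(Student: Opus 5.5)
We prove existence by checking that the process $Y$ produced by the recursive concatenation above is a strong solution. We prove uniqueness by an induction over the jump times $T_m$ of the dominating Poisson process $N$: any strong solution must agree with $Y$ on each interval $[T_{m-1},T_m]$.

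\emph{Existence.} We argue by induction on $m$ that $Y$ restricted to $(-\infty,T_m]$ is well defined, càdlàg, and adapted. More precisely, we show that $Y_t\mathds{1}_{\{t\le T_m\}}$ is $\mathcal{F}_t$-measurable, and that $Y_{T_m}$ together with the stopped path $Y_{(-\infty,T_m]}$ is $\mathcal{F}_{T_m}$-measurable.

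For the inductive step, take the state $(i_{m-1},x_{m-1})$, which is $\mathcal{F}_{T_{m-1}}$-measurable.
\begin{itemize}
\item We need measurability of the map $(i,x,s,\omega)\mapsto Z^{(i,x,s)}$. Since $i\in\mathbb{Z}$ is countable, the key requirement is joint measurability in $(x,s)$. We will obtain this from pathwise uniqueness via the Yamada--Watanabe theorem, which gives a measurable solution functional $Z^{(i,x,s)}=F_i(x,\theta_sL)$. Composing $F_i$ with $\mathcal{F}_{T_{m-1}}$-measurable random inputs is then legitimate.
\item By the strong Markov property of $L$, the shifted process $\theta_{T_{m-1}}L$ is again a Lévy process independent of $\mathcal{F}_{T_{m-1}}$. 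This lets us substitute $s=T_{m-1}$, $x=x_{m-1}$ into the solution functional, giving a valid solution of the mode-$i_{m-1}$ SDE driven by the shifted noise.
\item Since $N$ and $L$ are independent, $L$ has no jump at $T_m$ almost surely. Hence $X_{T_m-}=X_{T_m}$, and $X$ is continuous at the jump times of $J$.
\item The increment $\mathfrak{h}(T_m,U_m,Y_{(-\infty,T_m)})$ is $\mathcal{F}_{T_m}$-measurable, because $\mathfrak{h}$ is non-anticipative and $(T_m,U_m)$ is an atom of $\mathfrak{p}$.
\end{itemize}

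Next we verify the integral equations. Because $\mathfrak{h}=0$ for $z\ge\lambda$, the integral
\[
\int_0^t\int_{(0,\infty)} h(s,z)\,\mathfrak{p}(\dd s,\dd z)
\]
is the finite sum $\sum_{T_m\le t}\mathfrak{h}(T_m,U_m,Y_{(-\infty,T_m)})$. This is exactly the increment of $J$ on $[0,t]$. For $X$, we add up the segment equations over the intervals. This uses that the stochastic integral over $[T_{m-1},T_m\wedge t]$ with respect to $L$ coincides with the integral with respect to $\theta_{T_{m-1}}L$ on the shifted time scale. Finally, $T_m\to\infty$ almost surely, so the construction covers all of $[0,\infty)$.

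\emph{Uniqueness.} Let $\tilde Y=(\tilde J,\tilde X)$ be another strong solution with the same initial condition $\eta$. We show by induction that $\tilde Y=Y$ on $(-\infty,T_m]$ almost surely.
\begin{itemize}
\item The base case $m=0$ holds by condition (1) of the definition.
\item Assume the two solutions agree up to $T_{m-1}$. On $(T_{m-1},T_m)$ the measure $\mathfrak{p}$ has no atoms in $(0,\lambda)$, and atoms with $z\ge\lambda$ contribute $h=0$. Hence $\tilde J$ is constant and equal to $i_{m-1}$ there.
\item Consequently $\tilde X$ solves, on $[T_{m-1},T_m)$, the mode-$i_{m-1}$ SDE started at $x_{m-1}$ and driven by $\theta_{T_{m-1}}L$.
\item We apply pathwise uniqueness from Assumption~\ref{ass:main_construction} to this shifted problem. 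To do so, we extend $\tilde X$ past $T_m$ by the mode-$i_{m-1}$ dynamics, comparing two solutions on the whole half-line. We also invoke pathwise uniqueness at a random starting point and state, which again relies on the strong Markov property together with conditioning on $\mathcal{F}_{T_{m-1}}$.
\item At $T_m$ the histories agree, so the jump increments coincide, and $\tilde X$ is continuous at $T_m$.
\end{itemize}
Since $T_m\to\infty$, the two solutions are indistinguishable.

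\emph{Main obstacle.} The hardest step is the measurability argument: making sense of $Z^{(i,x,s)}$ at random, $\mathcal{F}_{T_{m-1}}$-measurable values of $(x,s)$, and transferring pathwise uniqueness to random initial times and states. The argument for it is the Yamada--Watanabe measurable solution map combined with the strong Markov property of $L$ (and independence from $\mathfrak{p}$). All remaining steps are bookkeeping on the concatenation.
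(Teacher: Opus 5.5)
Your proposal is correct and follows the same route as the paper. Existence comes from the recursive concatenation of mode-wise SDE segments between the Poisson times $T_m$, and uniqueness comes from induction on $m$, using pathwise uniqueness of the mode-$i_{m-1}$ SDE on $[T_{m-1},T_m)$ and the fact that the jump at $T_m$ is determined by the common history. Your additional points are technical details the paper's proof passes over, not a different argument: the jointly measurable solution map via Yamada--Watanabe together with the strong Markov property at $T_{m-1}$, the a.s.\ absence of jumps of $L$ at $T_m$ by independence of $L$ and $\mathfrak{p}$, and extending $\tilde X$ past $T_m$ to apply pathwise uniqueness on a random interval.
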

\begin{proof}
The recursive construction provides both existence and pathwise uniqueness simultaneously, both of which we verify now. For any realization of the driving processes $(L,\mathfrak{p})$, the sequence of potential jump times and marks $\{(T_m,U_m)\}_{m\ge1}$ is determined. At each step $m$, Assumption~\ref{ass:main_construction} guarantees a unique solution to the Euclidean dynamics on $[T_{m-1},T_m)$, while the jump rule at $T_m$ is deterministic given the history $Y_{(-\infty,T_m)}$. Since we construct càdlàg path segments and the jump times are non-explosive, the limiting  process $Y$ exists on $(-\infty,\infty)$ with càdlàg sample trajectories. Adaptedness follows from the fact that each construction step uses only $\mathcal{F}_{t}$-measurable information.

To verify pathwise uniqueness, suppose $\tilde{Y}$ is another strong solution driven by the same noise and initial condition. By induction, if $Y$ and $\tilde{Y}$ agree on $(-\infty,T_{m-1}]$, then they have identical Euclidean evolution on $[T_{m-1},T_m)$ by uniqueness of the SDE solution, and identical jumps at $T_m$ since the functional $\mathfrak{h}$ depends on the same history. Therefore $Y=\tilde{Y}$ on $(-\infty,\infty)$.
\end{proof}

The jump dynamics in \eqref{eq:hybrid_J} admit an equivalent representation through a path- and time-dependent intensity matrix $Q_t(y)=[q_{ij}(t,y)]_{i,j\in\mathbb{Z}}$ for $y\in D_E((-\infty,t))$. This perspective connects our specification to classical  jump process theory and provides the theoretical foundation for the simulation algorithm in Section \ref{sec:algo}.

Indeed, the jump functional $\mathfrak{h}$ uniquely determines the transition rates of $J$. For a given time $t$ and history $y\in D_E((-\infty,t))$, suppose the discrete component satisfies $J_{t-}=i$. The instantaneous rate of transitioning from state $i$ to state $j\neq i$ equals the Lebesgue measure, denoted by $\mathfrak{m}$, of the mark set that produces the corresponding jump increment,
\[
q_{ij}(t,y)\;=\;\mathfrak{m}\!\left(\{z\in[0,\lambda):\mathfrak{h}(t,z,y)=j-i\}\right).
\]
The diagonal entries satisfy the conservation property $q_{ii}(t,y)=-\sum_{j\neq i}q_{ij}(t,y)$, and we define the total exit rate as $q_i(t,y):=-q_{ii}(t,y)$. When evaluated along the solution  process, we employ the shorthand notation $q_{ij}(t):=q_{ij}\!\big(t,Y_{(-\infty,t)}\big)$ and $Q_t:=Q_t\!\big(Y_{(-\infty,t)}\big)$. By construction and Assumption~\ref{ass:main_construction}, we have $0\le q_i(t)\le\lambda$ for all times and histories.

Conversely, any specification of the intensity matrix family $\{Q_t(y)\}_{t\ge0,y\in D_E((-\infty,t))}$ with $q_i(t,y)\le\lambda$ determines, in a non-unique manner, a corresponding jump functional $\mathfrak{h}$. A canonical $\mathfrak{h}$ may be obtained as follows. Given $J_{t-}=i$ and history $y$, we partition the interval $[0,\lambda)$ into disjoint subsets $\{\Delta_{ij}(t,y)\}_{j\in\mathbb{Z}}$ such that $\mathfrak{m}(\Delta_{ij}(t,y))=q_{ij}(t,y)$ for $j \neq i$ and $\mathfrak{m}(\Delta_{ii}(t,y))=\lambda - q_i(t,y)$. To ensure the correct measurability conditions, we ask that for each $i,j\in\mathbb{Z}$, the mapping $(z,t,y)\mapsto \mathds{1}_{\{z\in\Delta_{ij}(t,y_{(-\infty,t)})\}}$ remains measurable. The jump functional can then be defined by
\[
\mathfrak{h}(t,z,y)=\sum_{j\in\mathbb{Z}}(j-i)\,\mathds{1}_{\{z\in\Delta_{ij}(t,y)\}},
\]
establishing a canonical correspondence between intensity matrix specifications and jump functionals. 

The uniform bound $\lambda$ on the total exit rates enables a uniformization approach where  transitions are decided only at a discrete set of time points rather than continuously. A master Poisson process with rate $\lambda$ generates potential event times $\{T_m\}_{m\ge 1}$. At each event time $T_m$, given the current state $i=J_{T_m-}$ and accumulated history $Y_{(-\infty,T_m)}$, a jump to $j\neq i$ occurs with probability $q_{ij}(T_m)/\lambda$, while the discrete state remains unchanged with probability $1-q_{i}(T_m)/\lambda$. Note that when $q_i(T_m)=0$, no real jump can occur. This uniformization scheme generates a process with the same distribution as the original construction, providing the algorithmic foundation for our simulation approach in Section~\ref{sec:algo}.

\section{Illustrative Applications of the Framework}\label{sec:apps}
The versatility of mH-SDEs lies in their ability to encode diverse memory structures through the time- and trajectory-dependent intensity matrix
\[
Q_t(y)=\big[q_{ij}(t,y)\big]_{i,j\in\mathbb{Z}},\qquad y\in D_E((-\infty,t)).
\]
This section demonstrates how  path functionals of the history $Y_{(-\infty,t)}$ can capture empirically motivated transition mechanisms across different application domains. Our examples illustrate dependencies on the Euclidean  process history, discrete timing patterns, underlying noise characteristics, and aggregate system states. The  approach's ability to specify transition logic via path functionals enables practitioners to incorporate well-established concepts within a unified mathematical structure.

To illustrate the practical implementation of these concepts, we provide detailed numerical examples in the Appendix that demonstrate each class of applications: regime transitions with Euclidean path functionals (Appendix~\ref{app:insurance}), semi-Markov transitions with age dependence (Appendix~\ref{app:reliability}), jump activity-based transitions (Appendix~\ref{app:levy}), and self-referential dynamics with path reinforcement (Appendix~\ref{app:reinforcement}).

For notational convenience when evaluated along the solution process, we write
\[
Q_t:=Q_t\!\big(Y_{(-\infty,t)}\big),\qquad q_{ij}(t):=q_{ij}\!\big(t,Y_{(-\infty,t)}\big),\qquad
q_{ii}(t):=-\sum_{j\ne i}q_{ij}(t).
\]

\subsection{Regime Transitions Driven by Euclidean Path Functionals}\label{subsec:ex1}
In quantitative finance, asset price dynamics often exhibit regime-dependent behavior where volatility and drift parameters change based on historical price patterns. This motivates hybrid models where  transition rates depend on path-dependent risk measures and performance indicators. The intensity matrix $Q_t$ can incorporate such dependencies through functionals that have been extensively studied in derivatives pricing and risk management.

Consider two fundamental path-dependent quantities: the occupation time above a threshold and the maximum drawdown from historical peaks. The occupation time of the  process $X$ above a barrier $b$ over a rolling window $[t-\delta, t)$ for fixed $\delta>0$ is defined as
\[
\mathrm{Occ}_{[t-\delta,t)}^{(b)}(X):=\int_{t-\delta}^t \mathbf{1}_{\{X_{s-}\ge b\}}\dd s.
\]
This statistic plays a central role in pricing barrier and corridor options, with its distribution theory well-developed for various jump-diffusion models \cite{CaiChenWan2010}. The maximum drawdown at time $t$ is given by
\[
\mathrm{DD}_t:=M_t-X_t,\qquad M_t:=\sup_{0\le u\le t}X_u,
\]
representing the largest peak-to-trough decline observed up to time $t$. This measure forms the foundation for drawdown-based portfolio insurance strategies and tail-risk metrics such as Conditional Expected Drawdown \cite{CarrZhangHadjiliadis2011, GoldbergMahmoud2017}.

A natural specification for regime-transition intensity incorporates both measures,
\[
q_{ij}(t)=\big(\alpha_{ij}+\beta_{ij}\,\mathrm{Occ}_{[t-\delta,t)}^{(b)}(X)+\gamma_{ij}\,\mathbf{1}_{\{\mathrm{DD}_{t-}\ge \kappa\}}\big)\vee 0,\qquad i\ne j,
\]
where $\alpha_{ij}, \beta_{ij}, \gamma_{ij}$ are model parameters controlling baseline transition rates and sensitivity to historical features. For instance, exceeding a critical drawdown threshold $\kappa$ might trigger a jump to a high-volatility crisis regime, while sustained occupation above a performance benchmark could indicate transition to a stable growth regime. Since these functionals depend only on $Y_{(-\infty,t)}$, the resulting intensity remains predictable, ensuring consistency with the mH-SDE formulation.

\subsection{Age-Dependent and Semi-Markov Transitions}\label{subsec:ex2}
Many systems exhibit memory effects related to the timing structure of discrete transitions rather than Euclidean state values. A prominent example is age-dependency, where transition propensities depend on the elapsed time since entering the current discrete state. Define the current age or sojourn time as
\[
A_t:=t-\sup\{s<t:J_s\ne J_{s-}\}\quad\text{(with convention }\sup\emptyset=0\text{)}.
\]
Age-dependent transition rates characterize semi-Markov processes, which find extensive application in reliability engineering, survival analysis, and life insurance \cite{LimniosOprisan2001,bladt2025approximations}. In these contexts, failure rates or default probabilities may increase with component age (wear-out effects) or decrease (burn-in stabilization).

Our approach naturally accommodates such patterns by specifying intensities as functions of the current age,
\[
q_{ij}(t)=\lambda_{ij}\!\big(A_t\big),\qquad i\ne j.
\]
The functions $\lambda_{ij}$ can encode various hazard rate profiles, including bounded aging functions such as $\lambda_{ij}(A_t) = \alpha_{ij}(1 - e^{-\gamma_{ij} A_t})$ for parameters $\alpha_{ij} \leq \lambda$ and $\gamma_{ij} > 0$, which provides monotonically increasing hazard rates that asymptotically approach the bound $\alpha_{ij}$. This specification captures aging with physical limits, where failure rates increase initially due to wear but eventually stabilize at a maximum level determined by design constraints. The specification could also be extended to permit dependence on the past of the Euclidean process, combining durational effects with Euclidean state memory. This demonstrates that semi-Markov processes emerge as a natural subclass of mH-SDEs while preserving the unified theoretical structure.

\subsection{Transitions Based on the Euclidean Jump Activity}\label{subsec:ex3}

The approach allows regime transitions based on characteristics of the Euclidean process $X$ itself, enabling models that respond to the statistical properties of the system's own jump behavior. This proves particularly relevant for systems where stability depends on the nature and intensity of internal discontinuous events, beyond what traditional diffusive volatility captures.

Consider systems vulnerable to clusters of large discontinuous events in their state variables but resilient to diffusive noise or small jumps. For a threshold $\varepsilon>0$ and fixed window length $\delta>0$, define the large-jump counting process
\[
N^{(\varepsilon)}_{[t-\delta,t)}:=\#\{s\in[t-\delta,t):\|\Delta X_s\|>\varepsilon\},
\]
which remains finite almost surely for any fixed $\varepsilon>0$. Such thresholded jump statistics appear prominently in high-frequency econometrics for detecting and characterizing jump activity \cite{AitSahaliaJacod2011}.

Transition rates can then depend on recent jump activity,
\[
q_{ij}(t)=\eta_{ij}\!\bigl(N^{(\varepsilon)}_{[t-\delta,t)}\bigr),\qquad i\ne j,
\]
allowing the system to enter defensive or high-alert regimes when experiencing bursts of large internal shocks. This mechanism captures scenarios where the discrete state represents different operational modes, such as normal operation versus crisis management, with transitions triggered by the intensity and magnitude of the system's own discontinuous behavior rather than just their cumulative effect.

\subsection{Self-Referential Dynamics and Path Reinforcement}

The discrete  process $J$ can exhibit self-referential behavior through predictable functionals that depend on its own historical evolution $J_{(-\infty,t)}$. This capability enables modeling of adaptive systems with complex memory effects. Two distinct mechanisms are examined: higher-order Markovian dependence, where transition intensities depend on recent state sequences, and  path reinforcement, where accumulated visitation patterns influence future transition propensities.

\subsubsection{Higher-Order Memory Without State Space Augmentation}

Classical higher-order Markov models require expanding the state space to encode sequence dependence. Our approach avoids this complication by incorporating sequence memory directly through  path functionals. For memory order $k\ge1$, define the recent state history vector
\[
H_t=(J_{t-},J^{(1)}_{t-},\ldots,J^{(k-1)}_{t-})\in\mathbb{Z}^k,
\]
where $J^{(\ell)}_{t-}$ denotes the $\ell$-th most recently visited state before the current time. Since $H_t$ constitutes a predictable functional of the discrete  trajectory $J_{(-\infty,t)}$, no formal state space augmentation is required. Transition intensities can then be specified as
\[
q_{ij}(t)=\Lambda_{ij}\!\big(H_t\big),\qquad i\ne j,
\]
defining a $k$-th order model that operates directly on the original space $\mathbb{Z}$.

This approach offers computational advantages over classical state augmentation methods. Traditional treatments embed higher-order  processes into first-order Markov chains on the expanded space $\mathbb{Z}^k$, leading to exponential growth in computational complexity \cite{ChingFungNg2004, ChingNgFung2008}. In contrast, our  path-functional approach maintains the discrete component on $\mathbb{Z}$ while tracking only the summary vector $H_t$. When a transition $i\to j$ occurs, the update rule $(i,h_1,\ldots,h_{k-1})\to (j,i,h_1,\ldots,h_{k-2})$ efficiently maintains the necessary history. This design preserves compatibility with the Modular-Poisson algorithm while avoiding state space explosion.

\subsubsection{Path Reinforcement and Self-Excitation Mechanisms}\label{subsec:ex4b}

 Path reinforcement models exhibit transition rates that adapt based on accumulated visitation statistics. Define the cumulative occupation time and transition count functionals up to time $t$,
\[
\mathrm{Loc}_i(t)=\int_{0}^{t}\mathds{1}_{\{J_{s-}=i\}}\,\dd s,\qquad
\mathrm{Cnt}_{ij}(t)=\#\{s\in(0,t):J_{s-}=i,\ \Delta J_s=j-i\},
\]
where $\Delta J_s:=J_s-J_{s-}$ denotes the jump increment. These statistics enable two types of reinforcement: vertex reinforcement, where states with large occupation times $\mathrm{Loc}_i(t)$ become increasingly attractive or ``sticky'', and edge reinforcement, where frequently traversed  transitions gain higher propensities through accumulated counts $\mathrm{Cnt}_{ij}(t)$. Such mechanisms appear in vertex-reinforced jump  processes and edge-reinforced random walks \cite{DavisVolkov2002, DavisVolkov2004, SabotTarres2015}, as well as in self-exciting point  processes where recent events temporarily elevate future intensities \cite{Hawkes1971}.

To ensure compatibility with our theoretical  formulation, the reinforcement mechanism must respect the uniform intensity bound $\lambda$ from Assumption~\ref{ass:main_construction}. This is achieved through a softmax parametrization,
\begin{equation}\label{eq:selfref-softmax}
q_{ij}(t)=\lambda\,\frac{\exp\{\vartheta_{ij}(t)\}}{1+\sum_{k\neq i}\exp\{\vartheta_{ik}(t)\}},\qquad
\vartheta_{ij}(t)=a_{ij}+b^{(\mathrm{cnt})}_{ij}\,\mathrm{Cnt}_{ij}(t)-c^{(\mathrm{loc})}_{i}\,\mathrm{Loc}_i(t).
\end{equation}
 The parameters $b^{(\mathrm{cnt})}_{ij}\ge0$ and $c^{(\mathrm{loc})}_{i}\ge0$ control the strength of edge reinforcement and vertex inertia, respectively, while $a_{ij}$ sets baseline log-intensities.

This specification automatically satisfies the required bounds,
\[
\sum_{j\neq i}q_{ij}(t)=\lambda\,\frac{\sum_{j\neq i}\exp\{\vartheta_{ij}(t)\}}{1+\sum_{k\neq i}\exp\{\vartheta_{ik}(t)\}}<\lambda.
\]
The reinforcement statistics update incrementally: $\mathrm{Loc}_i(t)$ grows linearly between jump events, while $\mathrm{Cnt}_{ij}(t)$ increments only at transition times.

\section{The Modular-Poisson Algorithm and Convergence}\label{sec:algo}

This section develops the Modular-Poisson algorithm, a flexible simulation approach for mH-SDEs that proposes a modular separation of discrete and Euclidean dynamics. We present the algorithmic structure, state a pathwise convergent result that validates its functionality, provide a detailed proof, and discuss the practical advantages of this modular design.

\subsection{Algorithmic Structure and Convergence Rate}\label{subsec:algo}

The numerical method constructs an approximating process $\widehat{Y}^{(n)} = (\widehat{J}^{(n)}, \widehat{X}^{(n)})$ by generating  path segments between consecutive events of a master Poisson  process. The construction relies on a generic numerical solver for the Euclidean dynamics, which we call a micro-algorithm. We begin by establishing error measurement conventions and micro-algorithm requirements.


Endow the hybrid state space $E = \mathbb{Z} \times \mathbb{R}^p$ with the norm
\[ \|z\|_E := |j| + \|x\|_{\mathbb{R}^p} \]
for any $z=(j,x) \in E$, where $\|\cdot\|_{\mathbb{R}^p}$ denotes an arbitrary norm in $\mathbb{R}^p$. State differences are computed coordinate-wise: $z_1 - z_2 := (j_1 - j_2, x_1 - x_2)$ for $z_1=(j_1,x_1)$ and $z_2=(j_2,x_2)$. For any trajectory $y \in D_E(I)$ and $t\in (-\infty,\infty)$ where $(-\infty,t]\subseteq I\subseteq (-\infty,\infty)$, we define the pathwise supremum norm
\[ \|y\|_t := \sup_{s \le t}\|y_s\|_E. \]
When context permits, we abbreviate $\|\cdot\|_E$ as $\|\cdot\|$ for notational clarity.


The modular architecture separates discrete jump generation from Euclidean evolution. A master Poisson  process with rate $\lambda$ produces potential jump times of the discrete  process, while the Euclidean component evolves according to mode-specific SDE dynamics between these events. This Euclidean evolution requires numerical approximation via a micro-algorithm. We assume access to a generic numerical solver that approximates the true solution $Z^{(i,x,s)}$ of the mode-$i$ SDE starting from state $x$ at time $s$. The micro-algorithm output is denoted $\widehat{Z}^{(i,x,s)}$, with accuracy requirements formalized as follows. In Subsection~\ref{subsec:implication}, we verify that standard SDE numerical methods satisfy these conditions.

\begin{assumption}\label{ass:micro}
There exist universal constants $a\ge0$, $\gamma>0$, and $n_0\ge 1$ such that, for each discrete mode $i\in\mathbb{Z}$, true initial state $x\in\mathbb{R}^p$, start time $s\ge0$, and any $b\ge 0$, whenever the approximated initial state $\widehat{x} \in \mathbb{R}^p$ satisfies $\|\widehat{x} - x\| \le (\log n)^b n^{-\gamma}$, the micro-algorithm produces a pathwise error bounded by
\[
\mathbb{P}\!\left(\big\|Z^{(i,x,s)}-\widehat{Z}^{(i,\widehat{x},s)}\big\|_t>(\log n)^{a+b}n^{-\gamma}\right)\le K_0(t)\,(\log n)^{-1}
\]
for any $t \ge 0$ and all $n\ge n_0$, where $K_0(t)$ is a finite constant depending only on the time horizon $t$.
\end{assumption}

\begin{remark}
The approximated  process notation $\widehat{Z}^{(i,x,s)}$ suppresses explicit dependence on the discretization parameter $n$ for notational simplicity, though the convergence rate scales explicitly with $n$. This convention applies throughout the algorithmic development.
\end{remark}

Assumption \ref{ass:micro} accommodates the practical reality that micro-algorithms must often start from approximated rather than exact initial conditions due to error accumulation from previous simulation intervals. The condition ensures that small initial perturbations maintain a manageable convergence rate, a crucial property for the modular  approach's theoretical integrity.

We now construct the approximating  process $\widehat{Y}^{(n)}$ recursively over the intervals determined by the master Poisson process arrival times $\{T_m\}_{m \ge 0}$, where $T_0 = 0$. The algorithm begins with an approximated version of the initial history $Y_t = (J_t, X_t)$ for $t \le 0$. Let us assume perfect knowledge of the discrete component's history while allowing approximation error in the Euclidean component, in other words,
\begin{itemize}
   \item  $\widehat{J}_s^{(n)} = J_s$ for all $s \le 0$;
   \item  For some $b_0\ge 0$, the initial error satisfies
   \[ \sup_{s \le 0} \|\widehat{X}_s^{(n)} - X_s\| \le (\log n)^{b_0} n^{-\gamma}.\]
\end{itemize}

Given the approximated path history $\widehat{Y}^{(n)}_{(-\infty, T_m]}$ up to time $T_m$, we determine the process behavior on the next interval $(T_m, T_{m+1}]$ through the following steps.

\begin{enumerate}
   \item \textbf{Euclidean evolution on $(T_m, T_{m+1})$:} The discrete component remains frozen while the Euclidean component evolves via the micro-algorithm,
   \begin{align*}
       \widehat{J}_{T_m+s}^{(n)} &= \widehat{J}_{T_m}^{(n)}, \\
       \widehat{X}_{T_m+s}^{(n)} &= \widehat{Z}_s^{(\widehat{J}_{T_m}^{(n)}, \widehat{X}_{T_m}^{(n)}, T_m)} \quad \text{for } s \in (0, T_{m+1}-T_m).
   \end{align*}
   Recall that the micro-algorithm starts from the approximated state $(\widehat{J}_{T_m}^{(n)}, \widehat{X}_{T_m}^{(n)})$, potentially introducing additional error that propagates forward.

   \item \textbf{Transition decision at $T_{m+1}$:} The discrete component potentially  transitions according to the  path-dependent intensity, computed using the approximated history. Define the effective jump functional
   \[
   \widehat{h}(T_{m+1}, z) := \mathfrak{h}(T_{m+1}, z, \widehat{Y}^{(n)}_{(-\infty, T_{m+1})}),
   \]
   and update the discrete state as
   \[
   \widehat{J}_{T_{m+1}}^{(n)} = \widehat{J}_{T_{m+1}-}^{(n)} + \widehat{h}(T_{m+1}, U_{m+1}),
   \]
   where we recall that $\{(T_\ell,U_\ell)\}_{\ell\ge1}$ denote the atoms of the Poisson random measure $\mathfrak{p}$. The Euclidean component remains continuous at $T_{m+1}$ by establishing $\widehat{X}_{T_{m+1}}^{(n)} = \widehat{X}_{T_{m+1}-}^{(n)}$.
\end{enumerate}

The recursive construction translates into the following event-driven simulation procedure.

\begin{algorithm}[H]
\caption{Modular-Poisson Simulation Algorithm}
\label{alg:modular-poisson}
\begin{algorithmic}[1]
\State \textbf{Input:} Initial path $\widehat{Y}^{(n)}$ on $(-\infty, 0]$, horizon $T$, discretization level $n$
\State \textbf{Output:} Simulated path $\widehat{Y}^{(n)}$ on $[0, T]$

\Statex
\State Generate Poisson times $\{T_1, T_2, \ldots, T_M\}$ with rate $\lambda$ up to time $T$
\State Set $m \gets 1$

\allowbreak
\Statex
\While{$T_m \leq T$}
   \State Get current state: $(i, x) \gets (\widehat{J}^{(n)}_{T_{m-1}}, \widehat{X}^{(n)}_{T_{m-1}})$
   \State Evolve Euclidean: $\widehat{X}^{(n)}_{[T_{m-1}, T_m)} \gets \text{MicroAlgorithm}(i, x, T_{m-1}, T_m-T_{m-1})$
   \State \quad \Comment{MicroAlgorithm(mode, initial state, start time, time length)}
   \State Set discrete: $\widehat{J}^{(n)}_{[T_{m-1}, T_m)} \gets i$
   
   \Statex
   \State Compute rates: $q_{ij} \gets q_{ij}(T_m, \widehat{Y}^{(n)}_{(-\infty, T_m)})$ for $j \neq i$
   \State Set $q_{\text{total}} \gets \sum_{j \neq i} q_{ij}$

   \allowbreak
   \State Generate $U \sim \text{Uniform}[0,\lambda]$
   \If{$U < q_{\text{total}}$}
       \State Sample new state $j^*$ with probabilities $\{q_{ij} / q_{\text{total}}\}_{j \neq i}$
       \State Set $\widehat{J}^{(n)}_{T_m} \gets j^*$
   \Else
       \State Set $\widehat{J}^{(n)}_{T_m} \gets i$
   \EndIf
   
   \State Set $\widehat{X}^{(n)}_{T_m} \gets \widehat{X}^{(n)}_{T_m-}$
   \State $m \gets m + 1$
\EndWhile

\allowbreak
\Statex
\If{$T_{m-1} < T< T_m$}
   \State Get final state: $(i, x) \gets (\widehat{J}^{(n)}_{T_{m-1}}, \widehat{X}^{(n)}_{T_{m-1}})$
   \State Evolve to end: $\widehat{X}^{(n)}_{[T_{m-1}, T]} \gets \text{MicroAlgorithm}(i, x, T_{m-1}, T-T_{m-1})$
   \State Set $\widehat{J}^{(n)}_{[T_{m-1}, T]} \gets i$
\EndIf

\State \Return $\widehat{Y}^{(n)}$
\end{algorithmic}
\end{algorithm}

The algorithm's modular structure allows the micro-algorithm subroutine to be any numerical SDE solver satisfying Assumption~\ref{ass:micro}. This flexibility enables practitioners to select specialized solvers for different regimes or problem characteristics.


Convergence of the Modular-Poisson scheme requires an additional regularity condition that controls how the transition mechanism responds to perturbations in the  path history.

\begin{assumption}\label{ass:logHolder}
The mark space partition $\{\Delta_{ij}(t, y)\}_{j\in\mathbb{Z}}$ associated to $\mathfrak{h}$ varies log-Hölder-continuously with respect to the  path history. Specifically, there exists a constant $K_H>0$ such that for any two  trajectories $y_1, y_2 \in D_E((-\infty,t))$ with $\|y_1-y_2\|_t<1$,
\[
\sup_{i\in\mathbb{Z}} \sum_{j\neq i} \mathfrak{m}\big(\Delta_{ij}(t,y_1) \triangle \Delta_{ij}(t,y_2)\big) \le \frac{K_H}{-\log\big(\|y_1-y_2\|_t\big)},
\]
where $\triangle$ denotes the symmetric difference operation between subsets of $\mathbb{R}$.
\end{assumption}

\begin{remark}
This assumption ensures that the transition intensity matrix $Q_t(y)$ exhibits log-Hölder continuity in the  path argument. The condition requires that both the transition rates $q_{ij}(t,y)$ depend continuously on the history $y$ and that the underlying geometric partition of the mark space varies smoothly. Specifically, it implies that there exists some constant $\tilde{K}_H > 0$ such that for every $t\in (-\infty,\infty)$ and $y_1, y_2 \in D_E((-\infty,t))$,
\[ \sup_{i\in\mathbb{Z}} \sum_{j\in\mathbb{Z}} |q_{ij}(t,y_1) - q_{ij}(t,y_2)| \;\le\; \frac{\tilde{K}_H}{-\log\big(\|y_1-y_2\|_t\big)}. \]
This regularity is essential for controlling the probability that small  path perturbations lead to different jump decisions, which could cause the approximated and true  processes to decouple.
\end{remark}

\begin{theorem}[Pathwise Convergence of the Modular-Poisson Algorithm]\label{theo:rateofconvergence}
Suppose Assumptions~\ref{ass:main_construction}, \ref{ass:micro}, and \ref{ass:logHolder} hold. Let the approximated initial  path $\widehat{Y}^{(n)}$ on $(-\infty, 0]$ satisfy $\widehat{J}_s^{(n)} = J_s$ for all $s \le 0$ and
\[ \sup_{s \le 0} \|\widehat{X}_s^{(n)} - X_s\| \le (\log n)^{b_0} n^{-\gamma} \]
for a constant $b_0\ge 0$. Then for any $T>0$ and $\varepsilon_1 \in (0,\gamma)$,
\[
\lim_{n\rightarrow \infty}\mathbb{P}\!\left(\sup_{0\le t \le T}\|\widehat{Y}^{(n)}_t-Y_t\| > n^{-\gamma+\varepsilon_1}\right)
\;=\; 0.
\]
\end{theorem}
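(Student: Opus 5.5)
We prove Theorem~\ref{theo:rateofconvergence} by conditioning on the master Poisson process and inducting over its arrival times $T_1<T_2<\dots$ in $[0,T]$. Since $N_T$ is Poisson($\lambda T$), for any $\delta>0$ we can fix $M$ with $\mathbb{P}(N_T>M)<\delta$, so it suffices to work on $\{N_T\le M\}$ with $M$ fixed as $n\to\infty$. We define the good events
\[
G_m:=\Big\{\widehat{J}^{(n)}=J \text{ on } (-\infty,T_m],\ \sup_{s\le T_m}\|\widehat{X}^{(n)}_s-X_s\|\le (\log n)^{b_0+ma}n^{-\gamma}\Big\},
\]
where $G_0$ holds by hypothesis. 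The goal is to show $\mathbb{P}(G_{m-1}\setminus G_m,\,T_m\le T)\to0$ for each $m\le M+1$, with the last step cut at $T$. On $G_{N_T+1}\cap\{N_T\le M\}$ the discrete components agree and the Euclidean error is at most $(\log n)^{b_0+(M+1)a}n^{-\gamma}$, which is eventually below $n^{-\gamma+\varepsilon_1}$. Because the norm adds $|j|$ and the $J$'s agree, this gives the theorem once $\delta\to0$.

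The inductive step has two failure modes on $G_{m-1}$.

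\textbf{(i) Euclidean failure.} On $(T_{m-1},T_m)$ both processes use mode $i=J_{T_{m-1}}$. The true process is $Z^{(i,x,T_{m-1})}$ and the approximation is $\widehat Z^{(i,\widehat x,T_{m-1})}$ with $\|\widehat x-x\|\le(\log n)^{b}n^{-\gamma}$, where $b=b_0+(m-1)a$. We apply Assumption~\ref{ass:micro} conditionally on $\mathcal{F}_{T_{m-1}}$. This uses the strong Markov and independent-increment property of $L$ at the stopping time $T_{m-1}$, and the independence of $\theta_{T_{m-1}}L$ from $\mathfrak p$ on $(T_{m-1},\infty)$. The result is that the error exceeds $(\log n)^{b+a}n^{-\gamma}$ on $[T_{m-1},T]$ with probability at most $K_0(T)(\log n)^{-1}\to0$. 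The issue that $(i,x,T_{m-1})$ is random is handled by the uniformity of the constants in Assumption~\ref{ass:micro}.

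\textbf{(ii) Decoupling at $T_m$.} Given $\mathcal{F}_{T_m-}$ and the Euclidean event of (i), the mark $U_m$ is uniform on $(0,\lambda)$ and independent of both histories. The jumps differ only if $U_m\in\bigcup_{j\ne i}\Delta_{ij}(T_m,Y)\triangle\Delta_{ij}(T_m,\widehat Y)$. This holds up to the diagonal set, whose symmetric difference is controlled by the off-diagonal ones since the partition has total length $\lambda$. By Assumption~\ref{ass:logHolder}, with path distance $\varepsilon_n=(\log n)^{b+a}n^{-\gamma}<1$, the conditional probability of decoupling is at most
\[
\frac{2K_H}{\lambda\,(-\log\varepsilon_n)}=\frac{2K_H}{\lambda(\gamma\log n-(b+a)\log\log n)}=O\big((\log n)^{-1}\big).
\]
Summing over at most $M+1$ steps gives $\mathbb{P}(\text{failure})\le \delta+(M+1)C(\log n)^{-1}$, which tends to $\delta$.

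The main obstacle is making the conditioning rigorous: $U_m$ must be independent of both the true and approximated histories up to $T_m-$, and the micro-algorithm bound must apply at a random start. To get this, we use that the approximated and true pre-$T_m$ histories are measurable with respect to $\mathcal{F}_{T_m-}$ (the micro-algorithm is assumed driven by $L$ and possibly independent auxiliary randomness). We also use that the Poisson atoms in $(0,\lambda)$ have i.i.d.\ uniform marks independent of arrival times and of $L$.

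A second point to check is the error bookkeeping. The polylog exponent grows by $a$ per step, but with at most $M+1$ steps it stays bounded, so the factor $n^{\varepsilon_1}$ absorbs it.
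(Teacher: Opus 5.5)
Your proposal is correct and follows essentially the paper's argument: like the paper, you propagate the Euclidean error across the Poisson arrivals with exponent $b_0+ma$ via Assumption~\ref{ass:micro}, bound each decoupling at an arrival by $O((\log n)^{-1})$ via Assumption~\ref{ass:logHolder}, and take a union bound over steps (the paper splits this into Lemmas~\ref{lem:decoupling_while_close_is_unlikely} and \ref{lem:error_accumulation} rather than running one induction on good events). The one real difference is the step count: the paper lets it grow as $\lfloor\log\log n\rfloor$, which gives an explicit $O((\log\log n)^2/\log n)$ failure bound but needs $K_0$ uniform in $b$, whereas your fixed-$M$ plus $\delta\to0$ device uses Assumption~\ref{ass:micro} only for finitely many $b$ and yields only the qualitative limit.
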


This convergence result addresses fundamental challenges unique to the modular simulation of  path-dependent hybrid systems. The theorem establishes that despite the complex interactions between Euclidean approximation errors and discrete  transition decisions, the overall scheme maintains nearly the same convergence rate.

Note that the proof must overcome two primary sources of error propagation. First, the modular construction creates error accumulation as approximated Euclidean segments are concatenated across  regime change intervals. Each micro-algorithm application starts from an already approximated state, causing local errors to compound over time. The challenge lies in proving that this accumulation remains controlled despite a number of  regime transitions occurring.

Second, and more subtly, the  path-dependent transition mechanism introduces the possibility of  process decoupling. Since  transition decisions for the approximated  process $\widehat{J}^{(n)}$ depend on the perturbed history $\widehat{Y}^{(n)}$ rather than the true history $Y$, small Euclidean approximation errors can potentially trigger different discrete  transitions. Once the discrete components diverge, the  processes follow fundamentally different dynamics, rendering the approximation meaningless. The log-Hölder assumption controls this decoupling probability, but the analysis requires careful tracking of how approximation errors interact with the  transition mechanism across multiple  jump events.

The convergence proof, which we present below, is technically involved and requires developing novel techniques to handle the interplay between these error sources across a random number of regime change events.

\subsection{Proof of Theorem~\ref{theo:rateofconvergence}}

The first part of our convergence analysis centers on controlling the decoupling time, that is, the first moment when the approximated discrete  process $\widehat{J}^{(n)}$ diverges from the true  process $J$. We establish key relationships between this decoupling event and the underlying Poisson structure, then employ the log-Hölder assumption to bound decoupling probabilities.


\begin{definition}
\label{def:decoupling_time}
The decoupling time $\iota^{(n)}$ is the first time when the true and approximated discrete  processes differ,
\[ \iota^{(n)} := \inf\{t \ge 0 : J_t \neq \widehat{J}_t^{(n)}\}. \]
\end{definition}

Since both  processes are piecewise constant with jumps occurring only at master Poisson event times $\{T_m\}_{m\ge1}$, any decoupling must coincide with one of these events. This observation motivates the following characterization.

\begin{lemma}
\label{lem:disagreement_index}
Suppose Assumptions~\ref{ass:main_construction} holds. Define $\kappa$ as the index of the first Poisson event where the jump functionals produce different outcomes,
\[
\kappa := \inf\big\{m \ge 1 : \mathfrak{h}(T_m, U_m, Y_{(-\infty, T_m)}) \neq \mathfrak{h}(T_m, U_m, \widehat{Y}^{(n)}_{(-\infty, T_m)})\big\}.
\]
Then $T_\kappa \le \iota^{(n)}$, providing a lower bound for the decoupling time.
\end{lemma}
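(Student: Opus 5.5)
The argument is a direct comparison of the two recursive constructions, using that $J$ and $\widehat{J}^{(n)}$ are driven by the same atoms $\{(T_m,U_m)\}_{m\ge1}$ of $\mathfrak{p}$. I will show by induction on $m$ that $J$ and $\widehat{J}^{(n)}$ coincide on $(-\infty,T_m]$ whenever $m<\kappa$. This gives $J_t=\widehat{J}^{(n)}_t$ for all $t<T_\kappa$, and hence $\iota^{(n)}\ge T_\kappa$ by the definition of the infimum. If $\kappa=\infty$, the same induction gives agreement for all times, since $T_m\to\infty$ almost surely. Then $\iota^{(n)}=\infty$ and the inequality holds trivially.

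The base case $m=0$ is the hypothesis $\widehat{J}^{(n)}_s=J_s$ for $s\le 0$, with $T_0=0$. For the inductive step, assume $J=\widehat{J}^{(n)}$ on $(-\infty,T_{m-1}]$ and $m<\kappa$.

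On $[T_{m-1},T_m)$ both discrete components are frozen. For $J$ this follows from the recursive construction after Assumption~\ref{ass:main_construction}, and for $\widehat{J}^{(n)}$ from step (1) of the algorithm. So both equal their common value at $T_{m-1}$ on this interval, and in particular $J_{T_m-}=\widehat{J}^{(n)}_{T_m-}$.

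At $T_m$ the two updates are
\[
J_{T_m}=J_{T_m-}+\mathfrak{h}(T_m,U_m,Y_{(-\infty,T_m)})
\quad\text{and}\quad
\widehat{J}^{(n)}_{T_m}=\widehat{J}^{(n)}_{T_m-}+\mathfrak{h}(T_m,U_m,\widehat{Y}^{(n)}_{(-\infty,T_m)}).
\]
Since $m<\kappa$, the two increments are equal by the definition of $\kappa$. Together with $J_{T_m-}=\widehat{J}^{(n)}_{T_m-}$, this gives $J_{T_m}=\widehat{J}^{(n)}_{T_m}$ and closes the induction.

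Two points need care. First, no transitions occur for either process outside the atoms $(T_m,U_m)$ in the strip $[0,\infty)\times(0,\lambda)$. Atoms with mark $z\ge\lambda$ produce $\mathfrak{h}=0$ by Assumption~\ref{ass:main_construction}, and the approximation only updates at the $T_m$, so restricting attention to the times $T_m$ is legitimate. Second, the lemma asserts only $T_\kappa\le\iota^{(n)}$, not equality. At $T_\kappa$ the functionals may differ and still yield equal new states, for instance in the degenerate sense where the increments differ but the next states agree under the partition structure. In that case decoupling could occur later or never, and the inequality covers it.

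The main obstacle is merely bookkeeping: checking that the frozen-interval description holds for both constructions and handling $\kappa=\infty$. I expect no analytic difficulty.
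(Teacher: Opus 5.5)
Your proposal is correct and follows the same route as the paper. Both argue by induction over the Poisson epochs that agreement of $J$ and $\widehat{J}^{(n)}$ on $(-\infty,T_m]$ persists while $m<\kappa$ (the paper writes this as $\{\kappa>m\}\subseteq\{\iota^{(n)}>T_m\}$), and you are more careful than the paper about the frozen intervals, the final passage to $T_\kappa\le\iota^{(n)}$, and the case $\kappa=\infty$.

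One side remark is wrong, though it does not affect the argument. Since $J_{T_\kappa-}=\widehat{J}^{(n)}_{T_\kappa-}$ and the increments differ at $T_\kappa$, the post-jump states must differ. Hence $\iota^{(n)}=T_\kappa$ whenever $\kappa<\infty$, and your scenario of differing increments producing equal next states (with decoupling later or never) cannot occur.
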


\begin{proof}
Let us prove by induction that the event $\{\kappa > m\}$ implies  process agreement up to time $T_m$,
\[
\{\kappa > m\} \subseteq \{\iota^{(n)} > T_m\} \quad \text{for all } m \ge 1.
\]

\textbf{Base case:} At $m=0$, both processes $Y$ and $Y^{(n)}$ start with identical discrete components by construction.

\textbf{Inductive step:} Assume the statement holds for some $m \ge 0$. The event $\{\kappa > m+1\}$ decomposes as
\[
\{\kappa > m+1\} = \{\kappa > m\} \cap \big\{\mathfrak{h}(T_{m+1}, U_{m+1}, Y_{(-\infty, T_{m+1})}) = \mathfrak{h}(T_{m+1}, U_{m+1}, \widehat{Y}^{(n)}_{(-\infty, T_{m+1})})\big\}.
\]

By the inductive hypothesis, on $\{\kappa > m\}$ the processes satisfy $J_s = \widehat{J}^{(n)}_s$ for all $s \le T_m$, which implies $J_{T_{m+1}-} = \widehat{J}^{(n)}_{T_{m+1}-}$. The jump updates at $T_{m+1}$ are
\begin{align*}
   J_{T_{m+1}} &= J_{T_{m+1}-} + \mathfrak{h}(T_{m+1}, U_{m+1}, Y_{(-\infty, T_{m+1})}), \\
   \widehat{J}^{(n)}_{T_{m+1}} &= \widehat{J}^{(n)}_{T_{m+1}-} + \mathfrak{h}(T_{m+1}, U_{m+1}, \widehat{Y}^{(n)}_{(-\infty, T_{m+1})}).
\end{align*}

On $\{\kappa > m+1\}$, both the pre-jump states and the jump increments are identical, hence $J_{T_{m+1}} = \widehat{J}^{(n)}_{T_{m+1}}$. This establishes $\{\kappa > m+1\} \subseteq \{\iota^{(n)} > T_{m+1}\}$, completing the induction.
\end{proof}

The following lemma reduces the analysis of decoupling probabilities to studying functional disagreement at individual Poisson events. The log-Hölder assumption provides the key tool for bounding these probabilities.

\begin{lemma}
\label{lem:decoupling_prob_bound}
Let $t \ge 0$ and $y, \hat{y} \in D_E((-\infty, t))$ be two trajectories with identical discrete components. For $U$ uniformly distributed on $[0, \lambda)$, the disagreement probability satisfies
\[
\mathbb{P}\big(\mathfrak{h}(t, U, y) \neq \mathfrak{h}(t, U, \hat{y})\big) \le \frac{1}{\lambda} \sup_{i\in\mathbb{Z}}\sum_{j\neq i} \mathfrak{m}\Big(\big(\Delta_{ij}(t,y) \triangle \Delta_{ij}(t,\hat{y})\big)\Big).
\]
By Assumption~\ref{ass:logHolder}, this yields the bound
\[
\mathbb{P}\big(\mathfrak{h}(t, U, y) \neq \mathfrak{h}(t, U, \hat{y})\big) \le \frac{K_H}{\lambda}\cdot\frac{1}{-\log\big(\|y-\hat{y}\|_t\big)}.
\]
\end{lemma}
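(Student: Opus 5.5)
Since $y$ and $\hat{y}$ share the same discrete component, they have the same pre-jump state $i := y^{J}_{t-} = \hat{y}^{J}_{t-}$, where $y^J$ denotes the $\mathbb{Z}$-coordinate of the path. With the canonical jump functional
\[
\mathfrak{h}(t,z,y)=\sum_{j\in\mathbb{Z}}(j-i)\,\mathds{1}_{\{z\in\Delta_{ij}(t,y)\}},
\]
the value $\mathfrak{h}(t,z,y)$ equals $j-i$ exactly when $z\in\Delta_{ij}(t,y)$. Since $\{\Delta_{ij}(t,y)\}_j$ is a partition of $[0,\lambda)$, the two jump values disagree at $z$ only when $z$ lies in $\Delta_{ij}(t,y)$ and $\Delta_{ik}(t,\hat y)$ for some $j\neq k$.

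I would first prove the set inclusion
\[
\{z\in[0,\lambda):\mathfrak{h}(t,z,y)\neq\mathfrak{h}(t,z,\hat y)\}\subseteq\bigcup_{j\neq i}\big(\Delta_{ij}(t,y)\triangle\Delta_{ij}(t,\hat y)\big).
\]
Take $z$ in the left-hand set, with $z\in\Delta_{ij}(t,y)\cap\Delta_{ik}(t,\hat y)$ and $j\neq k$. At least one of $j,k$ differs from $i$. If $j\neq i$, then $z\in\Delta_{ij}(t,y)\setminus\Delta_{ij}(t,\hat y)$, because $z$ already belongs to the disjoint set $\Delta_{ik}(t,\hat y)$. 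If instead $j=i$, then $k\neq i$ and symmetrically $z\in\Delta_{ik}(t,\hat y)\setminus\Delta_{ik}(t,y)$. In both cases $z$ lies in a symmetric difference with off-diagonal index, so the diagonal index $j=i$ never needs to be counted.

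Next, because $U$ is uniform on $[0,\lambda)$, its law is $\mathfrak{m}/\lambda$ restricted to $[0,\lambda)$. Applying subadditivity to the inclusion gives
\[
\mathbb{P}\big(\mathfrak{h}(t,U,y)\neq\mathfrak{h}(t,U,\hat y)\big)\le\frac{1}{\lambda}\sum_{j\neq i}\mathfrak{m}\big(\Delta_{ij}(t,y)\triangle\Delta_{ij}(t,\hat y)\big).
\]
Bounding the right-hand side by its supremum over $i\in\mathbb{Z}$ yields the first claim. The measurability assumption on $(z,t,y)\mapsto\mathds{1}_{\{z\in\Delta_{ij}(t,y)\}}$ guarantees that all the sets involved are Borel.

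The second claim then follows by inserting Assumption~\ref{ass:logHolder}. That assumption requires $\|y-\hat y\|_t<1$, so I would state the bound under this restriction. This is harmless, because when the distance is at least $1$ the right-hand side is either undefined or negative, and the bound should be read as trivial or excluded. If the paper's $\mathfrak{h}$ is a general functional rather than the canonical one, the same argument applies with $\Delta_{ij}(t,y):=\{z:\mathfrak{h}(t,z,y)=j-i\}$.

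The only point that needs care is the inclusion step, specifically showing that the diagonal set $\Delta_{ii}$ can be dropped. The rest is routine.
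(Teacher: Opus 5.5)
Your proof is correct and follows the same route as the paper: fix the common pre-jump state $i$, control the disagreement set through the off-diagonal symmetric differences $\Delta_{ij}(t,y)\triangle\Delta_{ij}(t,\hat y)$, take the supremum over $i$, and invoke Assumption~\ref{ass:logHolder}. Your inclusion-plus-subadditivity step is more careful than the paper's proof, which asserts an \emph{equality} that can fail because a mark in $\Delta_{ij}(t,y)\cap\Delta_{ik}(t,\hat y)$ with $j,k\neq i$, $j\neq k$, is counted twice; your remark that the second bound needs $\|y-\hat y\|_t<1$ is also a legitimate clarification.
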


\begin{proof}
Since the discrete components are identical, let $i$ denote their common value at time $t^-$. The disagreement occurs when the uniform mark $U$ falls in a region where the partitions differ. This probability equals
\[
\mathbb{P}\big(\mathfrak{h}(t, U, y) \neq \mathfrak{h}(t, U, \hat{y})\big) = \frac{1}{\lambda} \sum_{j\neq i} \mathfrak{m}\Big(\Delta_{ij}(t,y) \triangle \Delta_{ij}(t,\hat{y})\Big).
\]

Taking the supremum over all possible discrete states $i$ and applying Assumption~\ref{ass:logHolder} directly yields the stated bound.
\end{proof}

With the decoupling structure established, we now prove the two main technical lemmas that control decoupling under small errors and pathwise error accumulation, respectively.

\begin{lemma}
\label{lem:decoupling_while_close_is_unlikely}
Suppose Assumptions~\ref{ass:main_construction}, \ref{ass:micro}, and \ref{ass:logHolder} hold. For any $\varepsilon_1 \in (0,\gamma)$ and fixed time horizon $T$,
\begin{align}\label{eq:aux6}
\lim_{n\rightarrow\infty}\mathbb{P}\Big( \kappa \le N_T, \; N_T \le \lfloor\log\log n\rfloor, \; \sup_{0\le t \le T \wedge \iota^{(n)}} \|Y_t - \widehat{Y}^{(n)}_t\| \le n^{-\gamma+\varepsilon_1} \Big) = 0.
\end{align}
\end{lemma}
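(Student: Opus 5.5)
We bound the event in \eqref{eq:aux6} by a union over the index $m$ at which the first disagreement occurs, and then apply Lemma~\ref{lem:decoupling_prob_bound} at that single Poisson event. Write $\delta_n := n^{-\gamma+\varepsilon_1}$, $M_n := \lfloor \log\log n\rfloor$, and let $A_n$ denote the event in \eqref{eq:aux6}. On $A_n$ we have $\kappa = m$ for some $1\le m\le M_n$ with $T_m\le T$. By Lemma~\ref{lem:disagreement_index}, $\iota^{(n)}\ge T_\kappa = T_m$. Hence $J$ and $\widehat{J}^{(n)}$ agree on $(-\infty,T_m)$, and the uniform bound on $[0,T\wedge\iota^{(n)}]$ together with the initial error bound (which is at most $\delta_n$ for large $n$) gives $\|Y_{(-\infty,T_m)}-\widehat{Y}^{(n)}_{(-\infty,T_m)}\|_{T_m}\le\delta_n$. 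Consequently
\[
\mathbb{P}(A_n)\le\sum_{m=1}^{M_n}\mathbb{P}\Big(\kappa=m,\ T_m\le T,\ \|Y-\widehat{Y}^{(n)}\|_{T_m-}\le \delta_n\Big).
\]

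The key step is to condition on $\mathcal{G}_m$, the $\sigma$-algebra generated by $L$, by $\{T_\ell\}_{\ell\ge1}$ and by $U_1,\dots,U_{m-1}$. Both histories $Y_{(-\infty,T_m)}$ and $\widehat{Y}^{(n)}_{(-\infty,T_m)}$ are $\mathcal{G}_m$-measurable: the true path is built from $L$ and the first $m-1$ marks, and the micro-algorithm output is a functional of the driving noise on the relevant intervals, which I take to mean $\widehat Z$ uses $L$ and possibly auxiliary randomness independent of $U_m$. The mark $U_m$ is uniform on $(0,\lambda)$ and independent of $\mathcal{G}_m$, by the standard marking property of the Poisson random measure restricted to the strip.

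Since the event $\{\kappa>m-1,\ T_m\le T,\ \|Y-\widehat Y^{(n)}\|_{T_m-}\le\delta_n\}$ is $\mathcal{G}_m$-measurable and the discrete components agree there, Lemma~\ref{lem:decoupling_prob_bound} applied conditionally gives
\[
\mathbb{P}(\kappa=m,\dots\mid\mathcal{G}_m)\le \frac{K_H}{\lambda}\cdot\frac{1}{-\log\delta_n}=\frac{K_H}{\lambda(\gamma-\varepsilon_1)\log n}.
\]
The map $r\mapsto 1/(-\log r)$ is increasing on $(0,1)$, so the bound at $\|\cdot\|\le\delta_n$ is dominated by its value at $\delta_n$; this requires $\delta_n<1$, which holds for large $n$. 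Summing over $m\le M_n$ yields
\[
\mathbb{P}(A_n)\le \frac{K_H\,\log\log n}{\lambda(\gamma-\varepsilon_1)\log n}\longrightarrow0.
\]

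I expect the main obstacle to be the measurability and independence step. One must verify rigorously that $U_m$ is independent of everything that determines both histories up to $T_m-$, including the approximated path, and that the constraint $\sup_{t\le T\wedge\iota^{(n)}}$ really controls the history on $[0,T_m)$ and not merely up to the stopping time. The first point is handled by the Poisson marking theorem (given the times, the marks are i.i.d.\ uniform and independent of $L$). The second is handled by Lemma~\ref{lem:disagreement_index}. The restriction $N_T\le\log\log n$ is exactly what makes the union bound summable against the $1/\log n$ log-Hölder rate.
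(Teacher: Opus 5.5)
Your proof is correct and follows essentially the same route as the paper: a union bound over the first disagreement index $\kappa=m\le\lfloor\log\log n\rfloor$, with Lemma~\ref{lem:decoupling_prob_bound} applied conditionally at $T_m$ to obtain the per-event bound $K_H/(\lambda(\gamma-\varepsilon_1)\log n)$. Your variations are minor refinements of that argument. Summing only over $m$, rather than over both $N_T=\ell$ and $\kappa=k$, gives $(\log\log n)/\log n$ instead of $(\log\log n)^2/\log n$. Conditioning on $\mathcal{G}_m=\sigma(L,\{T_\ell\},U_1,\dots,U_{m-1})$ makes the independence of $U_m$ a direct consequence of the marking theorem, instead of a statement about $\mathcal{F}_{T_k-}$. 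You also explicitly handle the part of the history norm before time zero, which the paper leaves implicit.
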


\begin{proof}
Let $C_n$ denote the event in the l.h.s. of \eqref{eq:aux6}. Decompose it over the number of Poisson arrivals $\ell$ and the specific decoupling event $k$,
\[
C_n = \bigcup_{\ell=1}^{\lfloor\log\log n\rfloor} \bigcup_{k=1}^{\ell} \Big\{ \kappa=k, \; N_T=\ell, \; \sup_{0\le t \le T \wedge \iota^{(n)}} \|Y_t - \widehat{Y}^{(n)}_t\| \le n^{-\gamma+\varepsilon_1} \Big\}.
\]
By the union bound,
\begin{align}
\mathbb{P}(C_n) \le \sum_{\ell=1}^{\lfloor\log\log n\rfloor} \sum_{k=1}^{\ell} \mathbb{P}\Big( \kappa=k, \; N_T=\ell, \; \sup_{0\le t \le T \wedge \iota^{(n)}} \|Y_t - \widehat{Y}^{(n)}_t\| \le n^{-\gamma+\varepsilon_1} \Big).\label{eq:aux7}
\end{align}

For each term in the double sum, we use the tower property of conditional expectation. The event $\{\kappa=k\}$ implies no disagreement occurred before time $T_k$, so the processes remain coupled up to $T_{k-1}$. The probability can be bounded as
\begin{align*}
&\mathbb{P}\Big( \kappa=k, \; N_T=\ell, \; \sup_{0\le t \le T \wedge \iota^{(n)}} \|Y_t - \widehat{Y}^{(n)}_t\| \le n^{-\gamma+\varepsilon_1} \Big) \\
&\le \mathbb{E}\Big[ \mathbb{P}(\{\mathfrak{h}(T_k, U_k, Y_{(-\infty,T_k)}) \neq \mathfrak{h}(T_k, U_k, \widehat{Y}^{(n)}_{(-\infty,T_k)})\} \mid \mathcal{F}_{T_k-}, T_k) \cdot\mathds{1}_{\{\sup_{0\le t <T_k } \|Y_t - \widehat{Y}^{(n)}_t\| \le n^{-\gamma+\varepsilon_1}\}} \Big],
\end{align*}
where we used that on $\{\kappa = k\}$, we have $\iota^{(n)} \geq T_k$, so the supremum bound up to $T \wedge \iota^{(n)}$ implies the bound up to $T_k-$.

By Lemma~\ref{lem:decoupling_prob_bound}, on the event where the pathwise error is bounded by $n^{-\gamma+\varepsilon_1}$,
\begin{align*}
 &\mathbb{P}(\{\mathfrak{h}(T_k, U_k, Y_{(-\infty,T_k)}) \neq \mathfrak{h}(T_k, U_k, \widehat{Y}^{(n)}_{(-\infty,T_k)})\} \mid \mathcal{F}_{T_k-}, T_k) \cdot\mathds{1}_{\{\sup_{0\le t <T_k } \|Y_t - \widehat{Y}^{(n)}_t\| \le n^{-\gamma+\varepsilon_1}\}} \\
 & \quad \le \frac{K_H}{\lambda} \cdot \frac{1}{-\log(\|Y_{(-\infty,T_k)} - \widehat{Y}^{(n)}_{(-\infty,T_k)}\|_{T_k})} \cdot\mathds{1}_{\{\sup_{0\le t <T_k } \|Y_t - \widehat{Y}^{(n)}_t\| \le n^{-\gamma+\varepsilon_1}\}} \\
 &\quad \le \frac{K_H}{\lambda} \cdot \frac{1}{-\log(n^{-\gamma+\varepsilon_1})} = \frac{K_H}{\lambda(\gamma-\varepsilon_1)\log n}.
\end{align*}

Therefore, each individual summand in the r.h.s. of \eqref{eq:aux7} is bounded by $\frac{K_H}{\lambda(\gamma-\varepsilon_1)\log n}$. Since we sum over at most $\lfloor\log\log n\rfloor^2$ terms, we obtain
\begin{align*}
\mathbb{P}(C_n) &\le \frac{\lfloor\log\log n\rfloor^2 \cdot K_H}{\lambda(\gamma-\varepsilon_1)\log n} \\
&= O\left(\frac{(\log\log n)^2}{\log n}\right) \to 0 \quad \text{as } n \to \infty.
\end{align*}
\end{proof}

The second key component of our convergence analysis controls the pathwise error accumulation in the modular construction. This lemma establishes that despite errors compounding across multiple regime change intervals, the overall approximation quality remains sufficient to prevent decoupling with high probability.

\begin{lemma}
\label{lem:error_accumulation}
Under Assumptions~\ref{ass:main_construction} and \ref{ass:micro}, for any $T>0$ and $\varepsilon_1 \in (0,\gamma)$, suppose the initial conditions satisfy $\|\widehat{Y}^{(n)}_0 - Y_0\| \le (\log n)^{b_0} n^{-\gamma}$ for some $b_0 \ge 0$. Then
\[
\lim_{n\rightarrow\infty} \mathbb{P}\Big(\sup_{0\le t \le T \wedge \iota^{(n)}}\|Y_t - \widehat{Y}^{(n)}_t\| > n^{-\gamma+\varepsilon_1}\Big) = 0.
\]
\end{lemma}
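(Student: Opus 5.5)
The plan is to localize on the number of Poisson events and then propagate the micro-algorithm error bound of Assumption~\ref{ass:micro} inductively across the intervals $[T_{m-1},T_m]$ for $m\le N_T+1$, up to the decoupling time. We first split
\[
\mathbb{P}\Big(\sup_{0\le t\le T\wedge\iota^{(n)}}\|Y_t-\widehat Y^{(n)}_t\|>n^{-\gamma+\varepsilon_1}\Big)\le \mathbb{P}\big(N_T>\lfloor\log\log n\rfloor\big)+\mathbb{P}\big(A_n\big),
\]
where $A_n$ is the same event intersected with $\{N_T\le\lfloor\log\log n\rfloor\}$. The first term tends to zero because $N_T$ is Poisson with mean $\lambda T$, which is fixed. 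Everything therefore reduces to controlling $A_n$, on which there are at most $\ell_n:=\lfloor\log\log n\rfloor$ events in $[0,T]$.

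\textbf{Inductive propagation.} Set $b_m:=b_0+ma$. Let $G_m$ be the event that, on $[T_{m-1},T_m\wedge T]$ and before $\iota^{(n)}$, the Euclidean error is at most $(\log n)^{b_m}n^{-\gamma}$. On $[T_{m-1},T_m)\cap[0,\iota^{(n)})$ the discrete components agree, say both equal $i$. Then $X_{T_{m-1}+s}=Z^{(i,x,T_{m-1})}_s$ and $\widehat X^{(n)}_{T_{m-1}+s}=\widehat Z^{(i,\widehat x,T_{m-1})}_s$, and since $X$ is continuous at $T_{m-1}$, $\|\widehat x-x\|\le(\log n)^{b_{m-1}}n^{-\gamma}$ on $G_{m-1}$. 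Because $T_{m-1}$ is a stopping time, we apply Assumption~\ref{ass:micro} conditionally on $\mathcal F_{T_{m-1}}$ with horizon $T$, using the strong Markov/independent-increment property of $L$ and of $\mathfrak p$ (the shift $\theta_{T_{m-1}}L$ is again a Lévy process independent of $\mathcal F_{T_{m-1}}$). This gives
\[
\mathbb{P}(G_{m-1}\setminus G_m)\le K_0(T)(\log n)^{-1}.
\]
A subtle point is that the micro-algorithm bound is stated for deterministic $(i,x,s,\widehat x)$. We would handle this by conditioning and by the uniformity of the constants in $i,x,s$, assuming measurability of the solver output in its inputs.

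\textbf{Summing up.} Iterating over $m\le \ell_n+1$ and using the union bound gives
\[
\mathbb{P}\big(A_n\cap\{\text{some }G_m\text{ fails}\}\big)\le(\ell_n+1)K_0(T)/\log n\to0.
\]
On the complement, the error before $\iota^{(n)}$ is at most $(\log n)^{b_0+(\ell_n+1)a}n^{-\gamma}$. It remains to check that $(\log n)^{a(\log\log n+1)+b_0}=\exp\big((a\log\log n+a+b_0)\log\log n\big)=n^{o(1)}$, since $(\log\log n)^2=o(\log n)$. Hence this bound is eventually at most $n^{-\gamma+\varepsilon_1}$. The discrete-component contribution to $\|\cdot\|_E$ vanishes before $\iota^{(n)}$, and the jump times introduce no extra error because $X$ and $\widehat X^{(n)}$ are both continuous there.

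\textbf{Main obstacle.} The hardest step is the conditional application of Assumption~\ref{ass:micro} at the random restart points with random, approximated initial states. This step requires rigor about conditioning on $\mathcal F_{T_{m-1}}$, about independence of the future Poisson atoms from the shifted Lévy noise, and about the fact that the horizon $T_m-T_{m-1}$ is random; the latter is handled by bounding with the fixed horizon $T$. The growth of the exponent $b_m$ is harmless exactly because the number of intervals is capped at $\log\log n$.
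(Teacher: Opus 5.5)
Your proposal is correct and follows essentially the same route as the paper: localize on $\{N_T\le\lfloor\log\log n\rfloor\}$, apply Assumption~\ref{ass:micro} conditionally at each restart time with exponents $b_m=b_0+ma$, sum the failure probabilities to get $(\lfloor\log\log n\rfloor+1)K_0(T)/\log n\to0$, and absorb $(\log n)^{b_0+(\lfloor\log\log n\rfloor+1)a}$ into $n^{\varepsilon_1}$ (your indexing of the $b_m$, which starts the first interval from $b_0$ and ends it at $b_1$, and your explicit $n^{o(1)}$ check are slightly more careful than the paper's). One caveat applies equally to your argument and the paper's: both control the error only for $t<\iota^{(n)}$, but the supremum as written includes $t=\iota^{(n)}$, where $|J_t-\widehat J^{(n)}_t|\ge1$, so the lemma must be read with that endpoint excluded (read literally, it would force $\mathbb{P}(\iota^{(n)}\le T)\to0$, which is false in general under Assumptions~\ref{ass:main_construction} and~\ref{ass:micro} alone).
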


\begin{proof}
Let $M = N_{T \wedge \iota^{(n)}}$ denote the number of Poisson events up to time $T \wedge \iota^{(n)}$. The pathwise error accumulates over at most $M+1$ intervals: the initial interval $[0, T_1 \wedge T \wedge \iota^{(n)}]$ and the subsequent intervals $[T_m \wedge T \wedge \iota^{(n)}, T_{m+1} \wedge T \wedge \iota^{(n)}]$ for $m = 1, \ldots, M$.

On each interval $[T_m \wedge T \wedge \iota^{(n)}, T_{m+1} \wedge T \wedge \iota^{(n)})$, the discrete components are identical (since $t < \iota^{(n)}$), so the error comes entirely from the Euclidean evolution. Heuristically, on the event in which decoupling has not occurred, applying the condition in Assumption~\ref{ass:micro} recursively, leads us to an error for the Euclidean component of the order $(\log n)^{b_m} n^{-\gamma}$ up to time $T_m$, where $b_m = b_0 + ma$. More precisely,
\begin{align*}
&\mathbb{P}\Big(\sup_{T_m \wedge T \wedge \iota^{(n)}\le t < T_{m+1}\wedge T \wedge \iota^{(n)}}\|Y_t - \widehat{Y}^{(n)}_t\| > (\log n)^{b_m}n^{-\gamma}, \\
&\qquad\qquad \sup_{0\le t < T_{m}\wedge T \wedge \iota^{(n)}}\|Y_t - \widehat{Y}^{(n)}_t\| \le (\log n)^{b_{m-1}}n^{-\gamma}\Big)\\
&= \mathbb{E}\Big[\mathbb{P}\Big(\sup_{T_m \wedge T \wedge \iota^{(n)}\le t < T_{m+1}\wedge T \wedge \iota^{(n)}}\|Y_t - \widehat{Y}^{(n)}_t\| > (\log n)^{b_m}n^{-\gamma}\mid \mathcal{F}_{T_m \wedge T\wedge \iota^{(n)}-}, T_m \wedge T\wedge \iota^{(n)}\Big) \\
&\qquad\qquad \cdot \mathds{1}_{\sup_{0\le t < T_{m}\wedge T \wedge \iota^{(n)}}\|Y_t - \widehat{Y}^{(n)}_t\| \le (\log n)^{b_{m-1}}n^{-\gamma}}\Big]\\
&\le \mathbb{E}\Big[K_0(T)(\log n)^{-1} \cdot \mathds{1}_{\sup_{0\le t < T_{m}\wedge T \wedge \iota^{(n)}}\|Y_t - \widehat{Y}^{(n)}_t\| \le (\log n)^{b_{m-1}}n^{-\gamma}}\Big]\\
&\le K_0(T)(\log n)^{-1}.
\end{align*}

The probability can be decomposed as
\begin{align*}
&\mathbb{P}\Big(\sup_{0\le t \le T \wedge \iota^{(n)}}\|Y_t - \widehat{Y}^{(n)}_t\| > n^{-\gamma+\varepsilon_1}\Big)\\
&\le \mathbb{P}\Big(\sup_{0\le t \le T \wedge \iota^{(n)}}\|Y_t - \widehat{Y}^{(n)}_t\| > n^{-\gamma+\varepsilon_1}, M\le \lfloor\log\log n\rfloor\Big)+\mathbb{P}(M> \lfloor\log\log n\rfloor).
\end{align*}

The second summand satisfies $\mathbb{P}(M > \lfloor\log\log n\rfloor) \le \mathbb{P}(N_T > \lfloor\log\log n\rfloor) = o(1)$ as $n \to \infty$. To complete the proof, it remains to investigate the asymptotic properties of the first summand.

Since $(\log n)^{b_0+(\lfloor\log\log n\rfloor+1) a} = o(n^{\varepsilon_1})$ for any $\varepsilon_1 > 0$ (as $\log\log n$ grows slower than any positive power of $\log n$), for sufficiently large $n$ we have
\[
(\log n)^{b_0+(\lfloor\log\log n\rfloor+1) a} n^{-\gamma} < n^{-\gamma+\varepsilon_1}.
\]

Therefore,
\begin{align*}
&\mathbb{P}\Big(\sup_{0\le t \le T \wedge \iota^{(n)}}\|Y_t - \widehat{Y}^{(n)}_t\| > n^{-\gamma+\varepsilon_1}, M\le \lfloor\log\log n\rfloor\Big)\\
&\le \mathbb{P}\Big(\bigcup_{m=0}^{\lfloor\log\log n\rfloor} \Big\{\sup_{T_m \wedge T \wedge \iota^{(n)}\le t < T_{m+1}\wedge T \wedge \iota^{(n)}}\|Y_t - \widehat{Y}^{(n)}_t\| > (\log n)^{b_m}n^{-\gamma}\Big\}\Big)\\
&\le \sum_{m=0}^{\lfloor\log\log n\rfloor}\mathbb{P}\Big(\sup_{T_m \wedge T \wedge \iota^{(n)}\le t < T_{m+1}\wedge T \wedge \iota^{(n)}}\|Y_t - \widehat{Y}^{(n)}_t\| > (\log n)^{b_m}n^{-\gamma}, \\
&\qquad\qquad\qquad\qquad\qquad\qquad \sup_{0\le t < T_{m}\wedge T \wedge \iota^{(n)}}\|Y_t - \widehat{Y}^{(n)}_t\| \le (\log n)^{b_{m-1}}n^{-\gamma}\Big)\\
&\le \sum_{m=0}^{\lfloor\log\log n\rfloor} K_0(T)(\log n)^{-1} = (\lfloor\log\log n\rfloor+1)K_0(T)(\log n)^{-1} = o(1),
\end{align*}
concluding the proof.
\end{proof}

To finalize the proof of Theorem~\ref{theo:rateofconvergence}, we decompose the convergence analysis by controlling the decoupling probability and pathwise error accumulation separately. Let us consider $E_n$ where
\begin{align*}
E_n & := \Big\{\sup_{0\le t \le T}\|\widehat{Y}^{(n)}_t-Y_t\| > n^{-\gamma+\varepsilon_1}\Big\}\cup \big\{\iota^{(n)} \le T\big\}\\
& \subseteq \Big\{\sup_{0\le t \le T \wedge \iota^{(n)}}\|\widehat{Y}^{(n)}_t-Y_t\| > n^{-\gamma+\varepsilon_1}\Big\}\cup \big\{\iota^{(n)} \le T\big\}.
\end{align*}
Employing Lemma~\ref{lem:disagreement_index},
\begin{align}
E_n \subseteq \Big\{\sup_{0\le t \le T \wedge \iota^{(n)}}\|\widehat{Y}^{(n)}_t-Y_t\| > \,n^{-\gamma+\varepsilon_1}\Big\}\cup \{\kappa \le N_T\}.\label{eq:aux_en}
\end{align}

Bound the set in the r.h.s. of \ref{eq:aux_en} by the union of three disjoint sets,
\[
\Big\{\sup_{0\le t \le T \wedge \iota^{(n)}}\|\widehat{Y}^{(n)}_t-Y_t\| > \,n^{-\gamma+\varepsilon_1}\Big\}\cup \{\kappa \le N_T\}\subseteq A^1_n \cup A^2_n \cup A^3_n,
\]
where
\begin{align*}
  A^1_n &:= \Big\{ \kappa \le N_T, \; N_T \le \lfloor\log\log n\rfloor, \; \sup_{0\le t \le T \wedge \iota^{(n)}} \|Y_t - \widehat{Y}^{(n)}_t\| \le n^{-\gamma+\varepsilon_1} \Big\}, \\
  A^2_n &:= \big\{N_T > \lfloor\log\log n\rfloor\big\}, \\
  A^3_n &:= \Big\{\sup_{0\le t \le T \wedge \iota^{(n)}}\|Y_t - \widehat{Y}^{(n)}_t\| > n^{-\gamma+\varepsilon_1}\Big\}.
\end{align*}

The term $A^1_n$ represents decoupling when both the number of Poisson events and pathwise error remain small. By Lemma~\ref{lem:decoupling_while_close_is_unlikely}, $\mathbb{P}(A^1_n) = o(1)$, which satisfies the required bound for sufficiently large $n$. The event $A^2_n$ corresponds to the case in which the Poisson process $N_T$ exceeds $\lfloor\log\log n\rfloor$ events by time $T$, the probability of which is clearly an $o(1)$ term. Finally, the probability of the event $A^3_n$ is $o(1)$ by Lemma~\ref{lem:error_accumulation}. Combining all terms via the union bound, we obtain
\[
\mathbb{P}(E_n) \le \mathbb{P}(A^1_n) + \mathbb{P}(A^2_n) + \mathbb{P}(A^3_n) = o(1),
\]
completing the proof of Theorem~\ref{theo:rateofconvergence}.

\subsection{Advantages of the Simulation Approach}\label{subsec:implication}

The primary advantage of the Modular-Poisson  methodology is its compositional design. The algorithm manages the  path-dependent  transitions at a high level, while allowing the practitioner to plug in any suitable numerical solver, a micro-algorithm, to handle the Euclidean evolution between jumps. This design offers significant flexibility, as one can mix and match different solvers for different regimes. For example, a simple and efficient Euler scheme could be used for a regime with well-behaved coefficients, while a more robust scheme could be employed for complex regimes.

Our main convergence result, Theorem~\ref{theo:rateofconvergence}, relies only on the pathwise convergence in probability stated in Assumption~\ref{ass:micro}. A number standard SDE solvers for Brownian- and Lévy-driven SDEs provide convergence properties that are stronger than required by Assumption \ref{ass:micro}, making them readily compatible with our  approach. Below we demonstrate how standard results from the literature directly imply our micro-algorithm conditions, making a wide range of existing solvers immediately applicable.

\textbf{Strong convergence with polynomial tail bounds.} Many numerical schemes achieve strong pathwise error bounds with polynomial decay rates. Under appropriate regularity conditions, these methods satisfy
\[
\mathbb{P}\!\left(\big\|Z^{(i,x,s)}-\widehat{Z}^{(i,\widehat{x},s)}\big\|_t>(\log n)^{a+b}n^{-\gamma}\right)\le C(t) n^{-q}
\]
for some $q>0$, where the initial error $\|\widehat{x} - x\|$ is of order $(\log n)^b n^{-\gamma}$. The preservation of convergence rates from perturbed inputs to solution outputs via Wong-Zakai methods is established in \cite{RomischWakolbinger1985,nguyen2021wong}. When $q>1$, setting $K_0(t) = C(t)(\log n)^{q-1}$ immediately verifies Assumption~\ref{ass:micro}.

\textbf{Moment-based convergence rates.} A broader class of methods provides $L^p$ error bounds. If the micro-algorithm satisfies
\[
\mathbb{E}\!\left[\big\|Z^{(i,x,s)}-\widehat{Z}^{(i,\widehat{x},s)}\big\|_t^{\,p}\right] \le C(t)(\log n)^{-\alpha} n^{-p\gamma}
\]
for some $p \ge 1$ and $\alpha>0$, then Markov's inequality yields
\[
\mathbb{P}\!\left(\big\|Z^{(i,x,s)}-\widehat{Z}^{(i,\widehat{x},s)}\big\|_t>(\log n)^{a+b}n^{-\gamma}\right) \le \frac{C(t)(\log n)^{-\alpha}}{(\log n)^{p(a+b)}} = C(t)(\log n)^{-\alpha-p(a+b)}.
\]
Choosing $a$ such that $\alpha + p(a+b) \ge 1$ directly verifies Assumption~\ref{ass:micro}. Such $L^p$ strong error estimates are available for Euler-type schemes under global Lipschitz assumptions, as documented in \cite{KuehnSchilling2019}. 

The modular architecture creates immediate compatibility with the extensive toolkit developed for Lévy-driven SDEs. Rather than developing specialized hybrid schemes from scratch, practitioners can draw on existing research on SDE simulation schemes and transfer their convergence properties into the mH-SDE  formulation. Classical Euler schemes provide foundational micro-algorithms, with the original weak convergence analysis by Protter and Talay \cite{ProtterTalay1997} and subsequent strong convergence refinements \cite{KuehnSchilling2019} establishing the pathwise error controls required by Assumption~\ref{ass:micro}. Specialized jump handling addresses the challenges of infinite activity Lévy  processes, often through truncation of small jumps \cite{Rubenthaler2003,fournier2011simulation} or Gaussian approximation of the compensated small-jump component \cite{AsmussenRosinski2001}. Both techniques yield schemes with explicit error bounds that integrate seamlessly into our  approach.

\section{Conclusion}\label{sec:conclusion}

In this work, we introduced hybrid stochastic differential equations with memory (mH-SDEs), a new class of hybrid systems where  transition rates depend on the joint  path history of both the Euclidean and discrete components. For this previously unstudied class of models driven by Lévy  processes, we developed the Modular-Poisson algorithm, a flexible and provably convergent simulation  approach. Our main contribution is a unified pathwise convergence analysis that establishes explicit rates, providing a rigorous foundation for the simulation of these complex systems. The  methodology's key innovation is its modularity, which separates the generation of discrete jumps from the Euclidean evolution, allowing practitioners to incorporate any suitable, well-studied SDE solver as a micro-algorithm without needing to re-derive the convergence theory for the entire hybrid system.

The theoretical foundation of our approach rests critically on the uniform bound assumption (Assumption~\ref{ass:main_construction}), which requires that the total exit rate from any discrete state never exceeds the constant $\lambda$. This seemingly technical condition is fundamental to the algorithm's success, as it enables the uniformization technique that drives our simulation scheme. Without this bound, the Poisson thinning  procedure would fail, and the delicate balance between Euclidean approximation errors (Assumption~\ref{ass:micro}) and discrete decoupling probabilities, controlled through our log-Hölder considerations (Assumption~\ref{ass:logHolder}), would likely break down. The uniform bound is what allows us to prove convergence under such general conditions on the micro-algorithms, making the  approach broadly applicable while maintaining theoretical rigor.

This research opens several promising avenues for future work. From a theoretical perspective, the most significant challenge would be to extend the  methodology beyond uniformizable systems by developing alternative simulation techniques for unbounded intensity rates. This would require fundamentally new approaches to controlling the interaction between Euclidean errors and discrete transition decisions. Furthermore, while our analysis establishes pathwise convergence, investigating weak convergence properties under a more robust structure would be valuable for applications requiring distributional approximations.

\appendix

\section{Numerical Examples}\label{app:numerical_examples}

This appendix demonstrates the Modular-Poisson algorithm through four representative applications that illustrate the main classes of mH-SDEs introduced in Section~\ref{sec:apps}. To ease interpretation, all examples consider binary discrete state spaces $\{0,1\}$, one-dimensional Euclidean  processes, and initial conditions specified only at $t=0$ rather than full  path histories on $(-\infty,0]$. These simplifications are made purely for pedagogical clarity; the framework readily accommodates higher-dimensional state spaces, multi-dimensional SDEs, and complex initial  path specifications. Each example validates the algorithm's performance while highlighting different aspects of  path-dependent  transition mechanisms.

\subsection{Insurance Reserve Dynamics}\label{app:insurance}

We implement the regime-changing insurance employing the model from Section~\ref{sec:apps} where  transitions depend on regulatory compliance history and reserve depletion patterns. The reserve  process $X_t$ follows
\begin{align*}
\dd X_t = \begin{pmatrix} \mu(J_t,X_t) & \sigma(J_t,X_t)\end{pmatrix} \dd L_t = \mu(J_t,X_t)\,\dd t + \sigma(J_t,X_t)\,\dd W_t
\end{align*}
where $L=\{L_t\}_{t\ge 0} = \{(t, W_t)^\intercal\}_{t\ge 0}$ is the two-dimensional Lévy  process with $W=\{W_t\}_{t\ge 0}$ a standard Brownian motion. The coefficient functions are given by normal market dynamics $\mu(0,x) = 0.08x + 0.02$, $\sigma(0,x) = 0.08$ and stressed market dynamics $\mu(1,x) = -0.03x + 0.01$, $\sigma(1,x) = 0.20$.

The  transition mechanism incorporates the occupation time above regulatory minimum $\mathrm{Occ}_{[t-1,t)}^{(1.0)}(X) = \int_{t-1}^t \mathbf{1}_{\{X_{s-} \geq 1.0\}}\,\dd s$ and drawdown $\mathrm{DD}_t = \max_{0 \leq s \leq t} X_s - X_t$ with transition rates
\[
q_{ij}(t) = \big(\alpha_{ij} + \beta_{ij} \,\mathrm{Occ}_{[t-1,t)}^{(1.0)}(X) + \gamma_{ij} \mathbf{1}_{\{\mathrm{DD}_{t-} \geq 0.25\}}\big)\vee 0,\quad i\neq j.
\]
Parameters encode realistic insurance dynamics: $(\alpha_{01}, \beta_{01}, \gamma_{01}) = (0.2, -0.5, 3.0)$ for normal-to-stressed  transitions and $(\alpha_{10}, \beta_{10}, \gamma_{10}) = (0.3, 2.0, -2.0)$ for recovery.

\begin{figure}[H]
\centering
\includegraphics[width=0.6\textwidth]{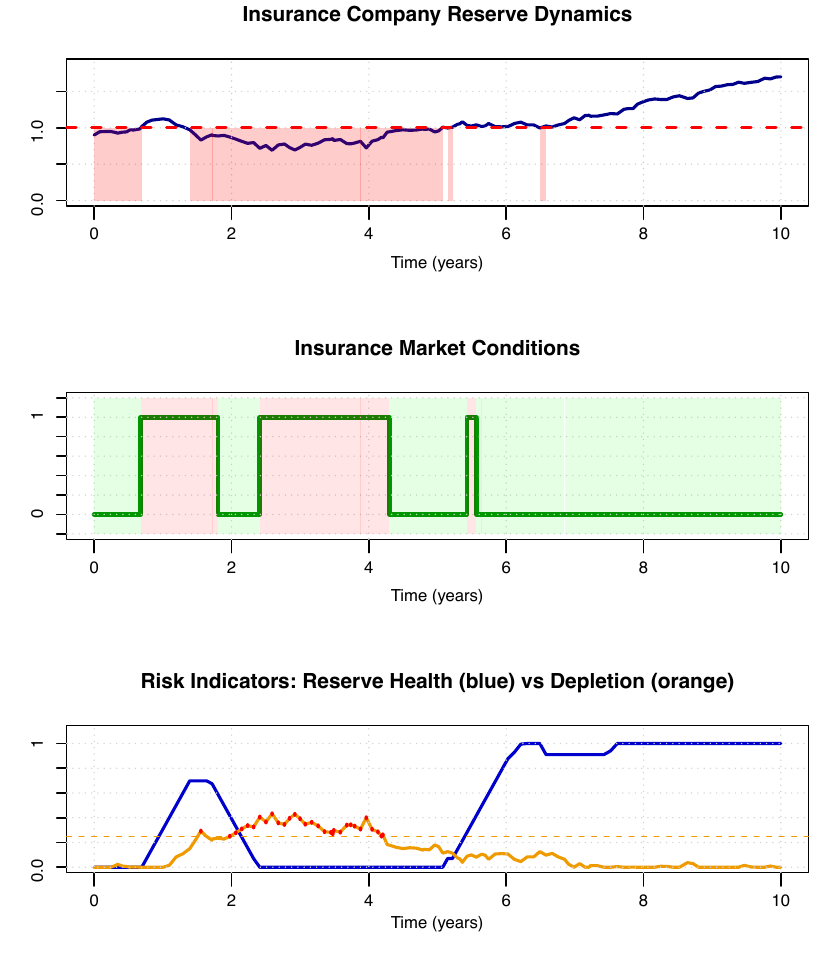}
\caption{Insurance reserve simulation over 10 years starting from $X_0 = 0.9$ (below regulatory minimum). Top: reserve levels with regulatory threshold (red line) and breach periods (shaded). Middle: market regime evolution showing 6  transitions. Bottom: risk indicators driving  transitions.}
\label{fig:insurance}
\end{figure}

Figure~\ref{fig:insurance} demonstrates the  path-dependent  transition mechanism with reserves fluctuating around the regulatory threshold throughout the simulation period. The system exhibits realistic clustering behavior where rapid  transitions occur during periods of reserve instability, followed by stable periods once adequate reserves accumulate.

\subsection{Semi-Markov Reliability System}\label{app:reliability}

This example illustrates the age-dependent  transitions from Section~\ref{sec:apps} through a two-component reliability system where failure rates depend solely on component age, creating non-exponential sojourn distributions.

Component replacement intensities follow bounded hazard functions of age $A_t$. Component 0 exhibits a bathtub curve while Component 1 shows linear wear:
\begin{align}
q_{01}(a) &= \begin{cases}
\min(2.0, 1.5 e^{-3a}) & \text{if } a < 0.5 \\
0.2 & \text{if } 0.5 \leq a < 5.0 \\
\min(2.0, 0.2 + 0.3(a-5)) & \text{if } a \geq 5.0
\end{cases} \\
q_{10}(a) &= \min(2.0, 0.3 + 0.25a)
\end{align}
Since the transition mechanism depends solely on the discrete  process history (component age), the Euclidean dynamics can be specified independently. For instance, the Euclidean process can degrade continuously according to mode-specific drift functions, with component replacement resetting performance to nominal levels while  changing the active component.

\begin{figure}[H]
\centering
\includegraphics[width=0.6\textwidth]{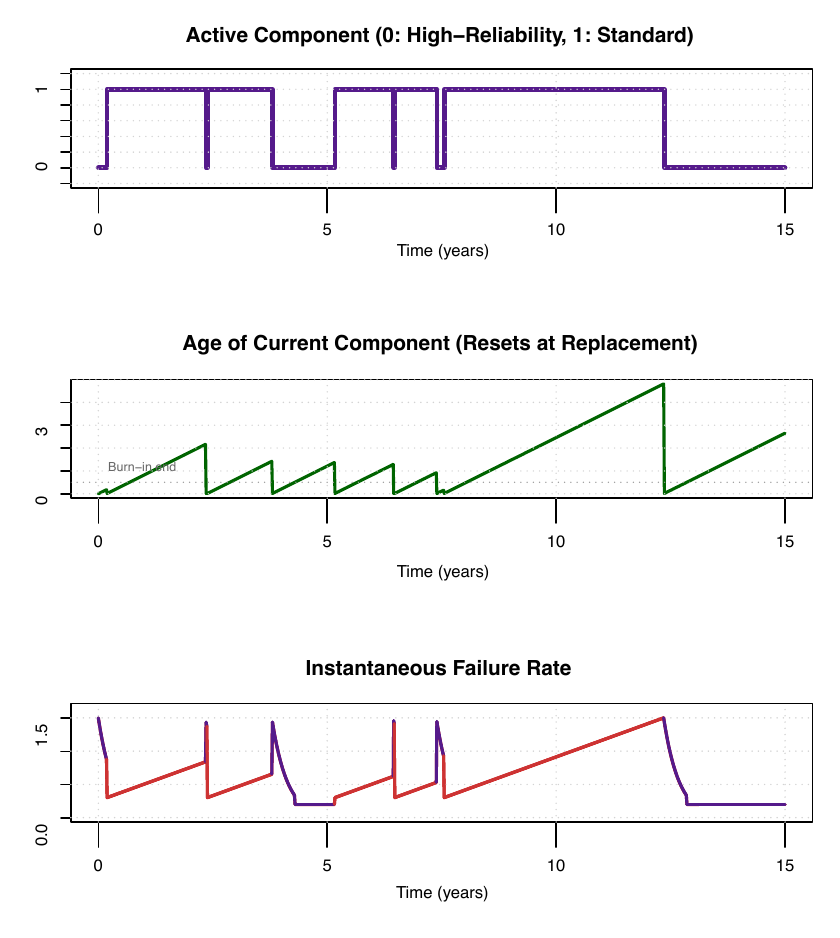}
\caption{Semi-Markov reliability system over 15 years with 10 component replacements. Top: active component transitions between high-reliability (State 0) and standard (State 1) components. Middle: component age evolution showing sawtooth pattern with resets at replacements and key age thresholds. Bottom: instantaneous failure rates displaying bathtub curve behavior for component 0 (purple) and linear wear for component 1 (red), all bounded by 2.0.}
\label{fig:reliability}
\end{figure}

Figure~\ref{fig:reliability} shows the characteristic semi-Markov behavior with varying sojourn times driven by component age rather than performance levels. The high-reliability component (0) dominates system usage while the standard component (1) experiences more frequent early failures.

\subsection{Lévy-Driven Financial Model}\label{app:levy}

We demonstrate jump-based regime changes based on catastrophic price movements, which is a modification of the approach presented in Section~\ref{sec:apps}. Asset price $X_t$ follows
\[
dX_t = \begin{pmatrix} \mu(J_t,X_t) & \sigma_1 (J_t,X_t) & \sigma_2 (J_t,X_t)\end{pmatrix} \dd L_t = \mu(J_t,X_t)\,dt + \sigma\,dW_t + X_{t-}\,dZ_t
\]
where $L=\{L_t\}_{t\ge 0} = \{(t, W_t, Z_t)\}_{t\ge 0}$ is a three-dimensional Lévy process with $W=\{W_t\}_{t\ge 0}$ being a Brownian motion and $Z=\{Z_t\}_{t\ge 0}$ an independent compound Poisson  process with asymmetric double exponential distribution. The coefficient functions are bull market drift $\mu(0,x) = 0.15x$, bear market drift $\mu(1,x) = -0.10x$, and constant diffusion $\sigma = 1$.

 Regime changes depend on catastrophic relative price movements exceeding 15\% within rolling windows. Define the signed jump counting processes:
\begin{align*}
N^+_{[t-1,t)} &:= \#\left\{s \in [t-1,t) : \frac{X_s - X_{s-}}{X_{s-}} > 0.15\right\} \\
N^-_{[t-1,t)} &:= \#\left\{s \in [t-1,t) : \frac{X_s - X_{s-}}{X_{s-}} < -0.15\right\}
\end{align*}
where $(X_s - X_{s-})/X_{s-}$ represents the relative price jump at time $s$. These count catastrophic rallies (price increases exceeding 15\%) and crashes (price drops exceeding 15\%) respectively. The transition rates are
\begin{align*}
q_{01}(t) &= \min\big(2.0,\, 0.1 + 0.8 N^-_{[t-1,t)}\big) \\
q_{10}(t) &= \min\big(2.0,\, 0.1 + 0.6 N^+_{[t-1,t)}\big)
\end{align*}

\begin{figure}[H]
\centering
\includegraphics[width=0.6\textwidth]{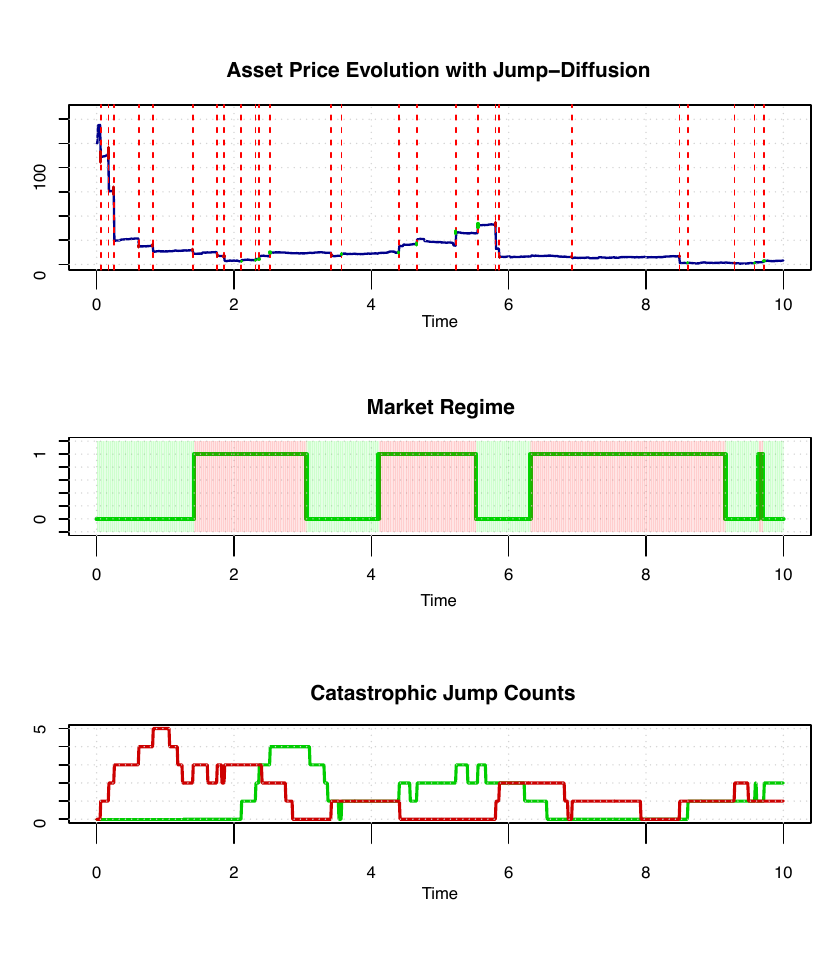}
\caption{Lévy-driven financial model over 10 years showing 8 regime changes. Top: asset price evolution with dramatic discontinuous jumps (vertical red lines) driven by the compound Poisson process. Middle: market regime  transitions between bull (State 0) and bear (State 1) markets. Bottom: rolling window counts of catastrophic positive jumps (green) and negative jumps (red) exceeding 15\% thresholds that drive regime  changes.}
\label{fig:levy}
\end{figure}

Figure~\ref{fig:levy} captures empirical market dynamics where crash clusters precipitate bear markets and rally periods signal recoveries. The asymmetric response parameters reflect the market tendency to decline rapidly but recover gradually.

\subsection{Path Reinforcement in Production Systems}\label{app:reinforcement}

This example demonstrates the self-referential dynamics from Section~\ref{sec:apps} where transition propensities evolve based on accumulated  transition history. The production system operates with two modes having different efficiency targets, with  transitions governed by the softmax parametrization from equation~\eqref{eq:selfref-softmax}.

The transition rates incorporate occupation times $\mathrm{Loc}_i(t)$ and transition counts $\mathrm{Cnt}_{ij}(t)$ through the  path-dependent log-intensities:
\begin{align*}
\theta_{01}(t) &= 0.0 + 0.25 \cdot \mathrm{Cnt}_{01}(t) - 0.005 \cdot \mathrm{Loc}_0(t) \\
\theta_{10}(t) &= 0.2 + 0.20 \cdot \mathrm{Cnt}_{10}(t) - 0.008 \cdot \mathrm{Loc}_1(t)
\end{align*}

The softmax transformation ensures rates remain bounded by $\lambda = 2.0$:
\[
q_{ij}(t) = \lambda \cdot \frac{\exp\{\theta_{ij}(t)\}}{1 + \sum_{k \neq i}\exp\{\theta_{ik}(t)\}}
\]

This specification captures organizational learning through edge reinforcement (repeated  transitions become easier) and lock-in effects through vertex inertia (extended stays create mild resistance to  changing). Since transitions depend solely on the discrete  process history, no specific Euclidean dynamics are required for this demonstration.

\begin{figure}[H]
\centering
\includegraphics[width=0.6\textwidth]{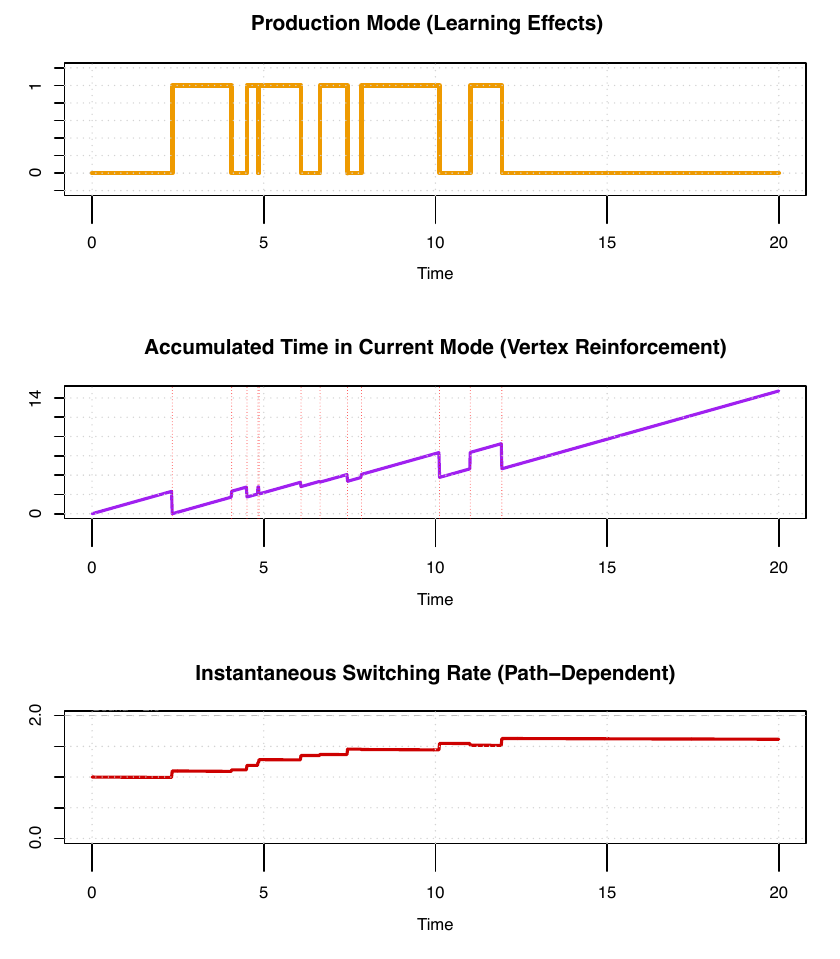}
\caption{Path reinforcement model over 20 time units showing 10 mode changes. Top: production mode evolution between standard (State 0) and optimized (State 1) with clustering of  transitions in early periods. Middle: accumulated time in current mode displaying sawtooth pattern with vertex reinforcement effects. Bottom: instantaneous transition rates increasing through edge reinforcement, bounded by $\lambda = 2.0$.}
\label{fig:reinforcement}
\end{figure}

Figure~\ref{fig:reinforcement} illustrates the self-reinforcing dynamics where early frequent  transitions (edge reinforcement) gradually increase transition propensities, while extended occupation periods create resistance through vertex inertia. The  transition rate evolves from approximately 1.0 initially to nearly 1.8 by the end, demonstrating the cumulative effect of  path reinforcement  mechanisms.

\section*{Acknowledgments}

The author acknowledges Will Allan and Giang Thu Nguyen for the initial discussions that gave birth to this line of research. The foundational ideas for this work were conceived while the author was a Research Associate at The University of Adelaide, although the theoretical framework presented herein has evolved significantly to ensure general solvability and convergence.

\bibliographystyle{apalike}
\bibliography{oscar}

\end{document}